\DeclareMathOperator*{\argmin}{arg\,min}
\newtheorem{remark}[theorem]{Remark}
\author{Arttu Arjas\thanks{Research Unit of Mathematical Sciences, University of Oulu, Finland.
  \email{arttu.arjas@oulu.fi} }
\and Mikko J. Sillanp\"a\"a\footnotemark[2]
\and Andreas Hauptmann\thanks{Research Unit of Mathematical Sciences, University of Oulu and Department of Computer Science, University College London, UK.}}
\date{\today}
\title{Sequential model correction for nonlinear inverse problems\thanks{Submitted to the editors XYZ.
\funding{This work was funded by the Academy of Finland, under projects 338408, 346574, 326291 and the Centre of Excellence of Inverse Modelling and Imaging project 353093.}}}
\begin{document}

\maketitle

\begin{abstract}
Inverse problems are in many cases solved with optimization techniques. When the underlying model is linear, first-order gradient methods are usually sufficient. With nonlinear models, due to nonconvexity, one must often resort to second-order methods that are computationally more expensive. In this work we aim to approximate a nonlinear model with a linear one and correct the resulting approximation error. We develop a sequential method that iteratively solves a linear inverse problem and updates the approximation error by evaluating it at the new solution. This treatment convexifies the problem and allows us to benefit from established convex optimization methods. We separately consider cases where the approximation is fixed over iterations and where the approximation is adaptive. In the fixed case we show theoretically under what assumptions the sequence converges. In the adaptive case, particularly considering the special case of approximation by first-order Taylor expansion, we show that with certain assumptions the sequence converges to a critical point of the original nonconvex functional. Furthermore, we show that with quadratic objective functions the sequence corresponds to the Gauss-Newton method. Finally, we showcase numerical results superior to the conventional model correction method. We also show, that a fixed approximation can provide competitive results with considerable computational speed-up.
\end{abstract}

\begin{keywords}
  Inverse problems, model approximation, optimization, nonlinear models
\end{keywords}

\begin{AMS}
  65K10, 65F20, 94A08, 47H10, 47A52
\end{AMS}

\section{Introduction}
Inverse problems appear in numerous places in mathematics, engineering and medicine \cite{muellersiltaneninvprob,arridgedatadriven}. They deal with deducing cause from the observed effects (data). Mathematically, the problem is often written in the form
\begin{equation}
    y = A(x),
\end{equation}
where $y \in Y$ is the measured data and $x \in X$ the unknown cause. Here $A: D(X) \to Y$, where $D(X) \subset X$, is a (possibly) nonlinear model that describes how the measurements are linked to the cause. We assume that both $X$ and $Y$ are infinite-dimensional Hilbert spaces. The inverse problem is then often solved by finding a cause that best matches the data when inserted into the model. Since the problem is usually ill-posed, one has to constrain the solution through regularization. Mathematically this is a variational optimization problem of the form
\begin{equation}
    x^* = \argmin_{u \in X} \left\{F(A(u), y) + \lambda R(u)\right\}
\end{equation}
where the minimized objective function is a sum of functions $F: Y \times Y \to \mathbb{R}_+$ and $R: X \to \mathbb{R}_+$. The first part measures the mismatch between the data and the model output (data fidelity), and the second part regularizes the solution. (Here, $\lambda>0$ is a regularization parameter to be adjusted for each data separately.) In this work we assume that both parts are given by convex functions, e.g. a norm. Given these assumptions, if the model is linear, the resulting objective function is also convex. This means that it has a unique minimizer which can be efficiently found with first-order optimization techniques such as gradient descent or primal-dual methods which are usually computationally cheap to implement \cite{benning2018modern}. In this case the Fréchet derivative of the forward model appearing in the gradient of the objective function is independent of the input of the model, and thus computationally cheap to evaluate repeatedly. However, if the model is nonlinear, the Fréchet derivative depends on the input of the model and must be recomputed for every input. This becomes computationally inefficient particularly when the model is given in functional form and differentiation must be carried out by using numerical methods. Furthermore if the model is nonlinear, the objective function is generally nonconvex with multiple local minima. In this case first-order methods usually converge slowly since the gradient only carries information about the local steepness of the objective function.
Thus, the Gauss-Newton method is often used for solving nonlinear inverse problems. It is a modification of Newton's method that utilizes linearization to avoid computing higher than first-order derivatives. Recently, extensions of primal-dual methods allowing nonlinear models have also been proposed \cite{chen2021nonconvex,valkonen2014primal}, which are also based on linearization of the model.

The purpose of this work is to solve the nonlinear inverse problem with a linear approximation, 
unlocking the benefits of first-order convex optimization. As mentioned in the earlier paragraph, the benefits include faster convergence and speed-up in the computation. 
The approximation naturally creates an approximation error that we have to account for. In this work, we assume that we have access to the accurate model to evaluate the approximation error locally when needed.

Compensations of approximation errors have been extensively studied in the literature. For instance, modeling errors can be efficiently corrected with the established approximation error method (AEM) \cite{arridge2006approximation,kaipio2006statistical} and has been successfully applied in a wide area of inverse problems \cite{hanninen2020application,sahlstrom2020modeling,nissinen2010compensation,tarvainen2009approximation,candiani2021approximation}. It is a linear correction that assumes that the approximation error has a Gaussian distribution. The mean vector and covariance matrix of the distribution must be estimated as a part of solving the inverse problem, for instance from training data under knowledge of the accurate forward operator. 
Naturally, AEM has trouble dealing with non-Gaussian approximation errors, which we also illustrate. To overcome this limitation, some recent works have proposed neural networks for model correction \cite{smylnongaussian, opcor2021lunz,mozumder2021model,hauptmann2018approximate,koponen2021model}, in which case the correction is nonlinear. Neural networks succeed in the task but require training data that represent the solution space well enough. 
A related problem is model approximation using so-called surrogates \cite{liangsurrogate, gpsurrogate}. A surrogate model is an approximative model used often when the exact model is too slow to evaluate. The surrogate, modeled for example by neural networks or Gaussian processes, is trained with pairs of inputs and outputs of the true model and can greatly speed up for example Markov chain Monte Carlo (MCMC) computations requiring hundreds of thousands of model evaluations. However, surrogate-based models suffer from the curse of dimensionality as the number of training samples needed grows exponentially with the dimension of the parameter space. Alternatively, when only the approximate operator is known, but an estimate on the error exists, one can utilize the regularizing sequential subspace optimization method \cite{schopfer2008fast,nitzsche2022compensating,blanke2020inverse} to compensate for the additional error. Another approach has been proposed in \cite{korolev2018image,bungert2020variational} using partially ordered spaces, where upper and lower bounds on the accurate operator are available.

We will particularly consider imaging applications in this work. A prototypical example of ill-posed inverse problem in imaging is deconvolution \cite{diffusion_weicker,muellersiltaneninvprob}. The purpose of deconvolution is to restore an image degraded by some kernel. The kernel can be thought of as a filter that integrates local information of the image thereby reducing its quality. Convolution is also closely connected to diffusion processes, namely diffusion can be seen as convolution with a Gaussian kernel \cite{arridge2019diffnet}. If the kernel does not depend on the image, diffusion is a linear operator. When the kernel is let depend on the image, the operator becomes nonlinear. This can be done for example with the Perona-Malik filter, where the diffusion strength depends on the magnitude of the image gradient \cite{peronamalik}. Thus areas of the image with edges are diffused less than smoother areas.

To deal with the aforementioned problems, we propose a sequential model correction method. Starting from an initial point, we iteratively update the approximation error by evaluating it at the current iterate and solve the inverse problem using the approximate model, eventually converging to a solution. The procedure locally linearizes and thus convexifies the variational problem. We note that we assume that we can evaluate the exact forward model and the approximate model for any $x \in X$. This is required for evaluating the approximation error accurately. We investigate two different cases: one where the approximation is fixed over sequence iterations and one where the approximation adapts locally. The summary of the results of this paper is as follows:
\begin{enumerate}[label=(\roman*)]
    \item For the fixed approximation, we derive the conditions needed for the convergence of the sequence.
    \item For the adaptive approximation, we show that under certain conditions on the approximation, taking small enough steps in the sequence always decreases the original objective function. With further regularity assumptions, we show that the sequence converges to a critical point of the original objective function. 
    \item If the approximation is chosen as a first-order Taylor expansion of the accurate model and the data fidelity and regularization terms are both quadratic, we recover the Gauss-Newton method.
\end{enumerate}
Finally we show with examples that the sequential model correction method outperforms AEM with various models, while being computationally efficient. 
While we have to be able to evaluate the accurate forward model, we need only few evaluations. Particularly, we show that a fixed approximation can yield similar quantitative results than the adaptive approximation, with a computational speed-up of up to factor 8.

This paper is organized as follows: in Section \ref{sec:problemstatement} we define notation, state the problem mathematically and introduce the approximation error method. In Section \ref{sec:convergence} we analyze the convergence of the method. The analysis is separated into fixed and adaptive cases. We also show the connection to the classic Gauss-Newton method. In Section \ref{sec:modelsandimplementation} we introduce the models used in numerical experiments and discuss implementation details. The results are shown and discussed in Section \ref{sec:results}. Finally, concluding remarks are given in Section \ref{sec:conclusion}.

\section{Problem statement} \label{sec:problemstatement}
Let us first establish notation and assumptions used throughout this paper unless otherwise stated.
\begin{center}
\bgroup
\def\arraystretch{1.5}
\begin{tabular}{|c|l|} 
 \hline
 Symbol & Meaning \\
 \hline
 $y$ & Measured data \\
 $x$, $u$ & Unknown we want to reconstruct \\
 $X, Y$ & Hilbert spaces. Norm $\|\cdot\|_{X/Y}$, inner product $\langle \cdot, \cdot \rangle_{X/Y}$ \\
 $A: X \to Y$ & True nonlinear forward operator \\
 $\Tilde{A}: X \to Y$ & Linear approximation of $A$ \\
 $F: Y\times Y \to \mathbb{R}_+$ & Convex data fidelity functional \\
 $R: X \to \mathbb{R}_+$ & Convex regularization functional \\
 $\lambda > 0$ & Regularization parameter \\
 \hline
\end{tabular}
\egroup
\end{center}
We wish to solve the inverse problem
\begin{equation} \label{eq:inverseproblem}
    y = A(x) + e
\end{equation}
for the unknown $x$, where $e$ is noise. In the variational framework, solving the inverse problem amounts to solving the variational problem
\begin{equation} \label{eq:nonlinprob}
    x^* = \argmin_{u \in X}\left\{F(A(u), y) + \lambda R(u)\right\}.
\end{equation}
For solving the variational problem \eqref{eq:nonlinprob} one needs the Fréchet derivative of $A$, as discussed earlier. Since $A$ is nonlinear, the Fréchet derivative depends on the input, and hence must be recomputed for each input which is time consuming particularly in iterative optimization algorithms.  For this reason we wish to approximate $A$ with a linear model $\Tilde{A}$. The Fréchet derivative of a linear model is independent of the input, making it computationally easier to handle. Writing \eqref{eq:inverseproblem} in terms of $\Tilde{A}$ yields
\begin{equation} \label{eq:inverseproblemerror}
    y = \Tilde{A}x + A(x) - \Tilde{A}x + e = \Tilde{A}x + \varepsilon(x) + e.
\end{equation}
We note that here the approximation by linear model creates an approximation error, denoted by $\varepsilon(x)$. Clearly this formulation of the model is still nonlinear, we have just moved the nonlinearity into $\varepsilon(x)$. Let us then assume that we have access to some initial reconstruction $x_0 \in X$. We can then write the model as
\begin{equation}
    y \approx \Tilde{A}x + \varepsilon(x_0) + e,
\end{equation}
which is linear since $x_0$ is known. This also means we can evaluate $\varepsilon(x_0)$. This leads to the convex variational problem
\begin{equation} \label{eq:linprob}
    x^* = \argmin_{u \in X}\left\{F(\Tilde{A}u, y - \varepsilon(x_0)) + \lambda R(u)\right\},
\end{equation}
which gives us a local reconstruction depending on $x_0$. From here it is natural to expand this construction into a sequence
\begin{equation} \label{eq:sequence}
    x_{k+1} = S(x_k) = \argmin_{u \in X}\left\{F(\Tilde{A}u, y - \varepsilon(x_k)) + \lambda R(u)\right\}.
\end{equation}
We emphasize that updating the sequence, i.e., solving a linearized and thus convex optimization problem can be done efficiently with first-order optimization methods.

In the following we aim to:
\begin{enumerate}[label=(\roman*)]
    \item Analyze the sequence \eqref{eq:sequence} theoretically  from a model correction perspective. We derive conditions needed for the convergence of the sequence. We also draw connections to existing methods for nonconvex optimization, i.e., with certain choices for the approximate model and the data fidelity and regularization functionals, we obtain the Gauss-Newton algorithm.
    \item Show that the sequential model correction method delivers superior results compared to the conventional method.
\end{enumerate}

\subsection{Approximation error method and non-Gaussianity of the approximation error} \label{sec:aem}
In this section we consider a finite-dimensional setting, that is, $X = \mathbb{R}^m$ and $Y = \mathbb{R}^n$. AEM has traditionally been used for model correction in inverse problems. It exploits the normal distribution to integrate the approximation error out of the model. It assumes that $\varepsilon(x) \sim \mathcal{N}(\mu_\varepsilon, \Sigma_\varepsilon)$, where $\mu_\varepsilon$ and $\Sigma_\varepsilon$ are estimated for instance from a training data set $\{x_i\}_{i=1}^N$ \cite{kaipio2006statistical,opcor2021lunz}. The accurate and approximate model are applied to each data point such that $\varepsilon_i = A(x_i) - \Tilde{A}x_i$. Then the estimates for the mean vector and covariance matrix are
\begin{equation}
    \widehat{\mu}_\varepsilon = \frac{1}{N}\sum_{i=1}^N \varepsilon_i, \ \ \widehat{\Sigma}_\varepsilon = \frac{1}{N-1}\sum_{i=1}^N (\varepsilon_i - \widehat{\mu}_\varepsilon)(\varepsilon_i - \widehat{\mu}_\varepsilon)^T.
\end{equation}
Furthermore, the noise $e$ is assumed independently Gaussian with zero-mean and variance $\sigma^2$. Thus we have $\mathrm{cov}(\varepsilon + e) = \Sigma_\varepsilon + \sigma^2I$, assuming mutual independence of the terms. This allows us to write the variational problem as
\begin{equation}
    x^* = \argmin_{u \in \mathbb{R}^m}\left\{\frac{1}{2}\|L^{-1}(\Tilde{A}u - y + \widehat{\mu}_\varepsilon)\|_2^2 + \lambda R(u)\right\},
\end{equation}
where $L$ is the Cholesky factor of $\widehat{\Sigma}_\varepsilon + \sigma^2I$ such that $\widehat{\Sigma}_\varepsilon + \sigma^2I = L^TL$. We note that here we need to assume that the data fidelity is given as a squared $\ell^2$-norm.

AEM specifically assumes that the approximation error has a Gaussian distribution. In this case multiplying the data with the inverse of the Cholesky factor whitens the noise, making it identically Gaussian, which justifies the use of squared $\ell^2$-data fidelity. However, non-Gaussian errors arise especially when trying to correct nonlinear models. This can be seen by assuming a Gaussian distribution for the unknown, i.e., $x \sim \mathcal{N}(m, C)$, and then looking at the distribution of the approximation error. When approximating a linear model with a linear model, the distribution of the approximation error $(A - \Tilde{A})x$ is still Gaussian with mean $(A - \Tilde{A})m$ and covariance $(A - \Tilde{A})C(A - \Tilde{A})^T$. With nonlinear $A$ this is no longer the case since the mean is not a linear function of $m$, meaning it would be different for all $x \in X$. Similarly the covariance would involve the Jacobian of $A$, which is different depending on the point in $X$ we are at.

\section{Convergence of the sequence} \label{sec:convergence}
We split the theoretical analysis into two parts. In the first part we assume that the approximation is fixed, i.e. it does not depend on the sequence iteration index $k$. In the second part we let the approximation depend on $k$. In the fixed case, we state the conditions needed for the convergence of the sequence. For the adaptive case, we consider the special case of local linear approximation at $x_k$ given by the first-order Taylor expansion. In that case we can show that the original nonconvex function decreases at each step of the sequence. The obtained result readily extends to other approximations that provide a descent direction.

\subsection{Fixed approximation} \label{sec:conv_fixed}
In the case of fixed approximation we investigate the convergence conditions of the sequence based on fixed point iterations. We recall that an iterated function is defined by a composition $S \circ \dots \circ S$ for $S: X \to X$. Then, the Banach fixed-point theorem states that if $S$ is contractive, i.e., $K$-Lipschitz with $K < 1$, the iterations will converge to a unique fixed point. We will use this theorem to state general conditions under which the sequence \eqref{eq:sequence} converges. We will start with simple cases to gain intuition into the subject and work our way to more general cases of the functional $F(A(u),y) + \lambda R(u)$. 

We start by looking at a simple invertible linear system and the behavior when we approximate the exact operator with another invertible linear operator.

\begin{theorem}[Linear and invertible operators]
    Let both $A: X \to Y$ and $\Tilde{A}: X \to Y$ be linear and invertible and $R = 0$. Furthermore, let the operator norm of $I - \Tilde{A}^{-1}A$ be smaller than one. Then sequence \eqref{eq:sequence} converges to $x^* = A^{-1}y$.
\end{theorem}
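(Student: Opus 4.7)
My strategy is to collapse the sequence \eqref{eq:sequence} to an explicit affine fixed-point iteration and then invoke the Banach fixed-point theorem invoked in the preamble to this subsection.

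First, I would carry out the minimization in \eqref{eq:sequence} explicitly. Since $R=0$ and $\Tilde{A}$ is invertible, and since the data fidelity $F(\cdot,z)$ is a nonnegative convex functional attaining its infimum exactly when its first argument equals $z$ (the standing working assumption, e.g.\ any squared norm or Bregman-type discrepancy), the minimizer satisfies $\Tilde{A}u = y-\varepsilon(x_k)$, giving $u = \Tilde{A}^{-1}(y-\varepsilon(x_k))$. Substituting $\varepsilon(x_k)=Ax_k-\Tilde{A}x_k$ turns the update into the affine recursion
\begin{equation*}
 x_{k+1} \;=\; \Tilde{A}^{-1}y \;+\; (I-\Tilde{A}^{-1}A)\,x_k.
\end{equation*}

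Second, I would recognize this as an affine map $S(x)=Mx+b$ on the Hilbert space $X$, with $M := I-\Tilde{A}^{-1}A$ and $b := \Tilde{A}^{-1}y$. The Lipschitz constant of $S$ equals the operator norm of $M$, which by hypothesis is strictly less than one. Hence $S$ is a strict contraction on the complete metric space $X$, and the Banach fixed-point theorem yields convergence of $\{x_k\}$ to the unique fixed point of $S$, independent of the initialization $x_0$.

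Third, I would identify the fixed point by solving $x^* = Mx^* + b$. Rearranging gives $\Tilde{A}^{-1}Ax^* = \Tilde{A}^{-1}y$, and applying $\Tilde{A}$ on both sides yields $Ax^*=y$, i.e.\ $x^* = A^{-1}y$, as claimed.

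The proof is essentially mechanical once the first reduction is in hand, since the contraction hypothesis $\|I-\Tilde{A}^{-1}A\|<1$ is doing all the work. The only mild subtlety, and what I would call out as the main thing to verify, is the compatibility assumption on $F$ that makes the explicit minimizer $u=\Tilde{A}^{-1}(y-\varepsilon(x_k))$ valid; in the rest of the paper this is enforced by working with a squared $\ell^2$ fidelity (as in Section~\ref{sec:aem}), so the identification of the minimizer is unambiguous.
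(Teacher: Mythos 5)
Your proposal is correct and follows essentially the same route as the paper's proof: both reduce the update to $S(x_k)=\Tilde{A}^{-1}(y-\varepsilon(x_k))$, verify that $S$ is a contraction with Lipschitz constant $\|I-\Tilde{A}^{-1}A\|<1$, apply the Banach fixed-point theorem, and identify the fixed point as $A^{-1}y$. Your explicit remark that the reduction relies on $F(\cdot,z)$ being minimized exactly when its first argument equals $z$ is a point the paper leaves implicit, but it does not change the argument.
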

\begin{proof}
    We have $x_{k+1} = S(x_k) = \Tilde{A}^{-1}(y - \varepsilon(x_k))$. Hence for $x_1, x_2 \in X$,
    \begin{equation*}
        \begin{split}
        &\|S(x_1) - S(x_2)\|_X = \|x_1 - \Tilde{A}^{-1}Ax_1 - x_2 + \Tilde{A}^{-1}Ax_2\|_X\\
        &= \|(I - \Tilde{A}^{-1}A)(x_1 - x_2)\|_X\\
        &\leq K\|x_1 - x_2\|_X,
    \end{split}
    \end{equation*}
    where the Lipschitz constant $K$ is the operator norm of $I - \Tilde{A}^{-1}A$. Thus by Banach fixed-point theorem, $S$ is a contraction and admits a unique fixed point. Let $x^*$ be a fixed point of $S$. Then
    \begin{equation}
    \begin{split}
        x^* &= \Tilde{A}^{-1}(y - Ax^* + \Tilde{A}x^*)\\
        \iff x^* &= A^{-1}y
        \end{split}
    \end{equation}
\end{proof}
The result essentially means that if $\Tilde{A}^{-1}A$ is close enough to the identity, that is, $A$ is close to $\Tilde{A}$, the sequence converges to the exact solution of the original problem.

We now examine a case where we need regularization to make the inverse problem uniquely solvable. In particular, we still assume that both $A$ and $\Tilde{A}$ are linear and both the data fidelity and regularizer are given as quadratic functionals. This yields the sequence
\begin{equation}
    x_{k+1} = \argmin_{u \in X}\left\{\frac{1}{2}\|\Tilde{A}u - y + \varepsilon(x_k)\|_Y^2 + \frac{\lambda}{2} \|u\|_X^2\right\}.
\end{equation}
\begin{theorem}[Linear operators with Tikhonov regularization]
    Let both $A: X \to Y$ and $\Tilde{A}: X \to Y$ be linear, $F(\Tilde{A}u, y - \varepsilon(x_k)) = \frac{1}{2}\|\Tilde{A}u - y + \varepsilon(x_k)\|_Y^2$, $R(u) = \frac{1}{2}\|u\|_X^2$ and $T = (\Tilde{A}^*\Tilde{A} + \lambda I)^{-1}\Tilde{A}^*$. Moreover, let the operator norm of $T(A - \Tilde{A})$ be less than one. Then sequence \eqref{eq:sequence} converges to 
    \[
        x^* = \left(I + T(A - \Tilde{A})\right)^{-1}Ty.
    \]
\end{theorem}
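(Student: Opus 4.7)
The plan is to mimic the strategy of the preceding theorem: write the update map $S$ in closed form, show it is a contraction under the given operator-norm assumption, then invoke the Banach fixed-point theorem and solve for the fixed point explicitly.

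First I would dispense with the optimization step. Since both $F$ and $R$ are quadratic and $R$ is strongly convex (coercive term $\tfrac{\lambda}{2}\|u\|_X^2$ with $\lambda>0$), the subproblem defining $x_{k+1}$ has a unique minimizer characterized by the first-order optimality condition
\begin{equation*}
    \tilde{A}^*(\tilde{A}x_{k+1} - y + \varepsilon(x_k)) + \lambda x_{k+1} = 0,
\end{equation*}
which rearranges to $x_{k+1} = T(y - \varepsilon(x_k))$. Because $A$ and $\tilde{A}$ are both linear, $\varepsilon(x_k) = (A - \tilde{A})x_k$, so the iteration map reduces to the affine form
\begin{equation*}
    S(x_k) = Ty - T(A-\tilde{A})x_k.
\end{equation*}

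Next I would check the contractivity condition. For any $x_1, x_2 \in X$, the affine form gives
\begin{equation*}
    \|S(x_1) - S(x_2)\|_X = \|T(A-\tilde{A})(x_1 - x_2)\|_X \leq \|T(A-\tilde{A})\|\,\|x_1 - x_2\|_X,
\end{equation*}
and the hypothesis $\|T(A-\tilde{A})\| < 1$ makes $S$ a contraction on $X$. The Banach fixed-point theorem then yields a unique fixed point $x^*$ to which $\{x_k\}$ converges.

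Finally I would identify $x^*$. A fixed point satisfies $x^* = Ty - T(A-\tilde{A})x^*$, i.e.\ $(I + T(A-\tilde{A}))x^* = Ty$. The operator $I + T(A-\tilde{A})$ is invertible by the Neumann series argument applied to $-T(A-\tilde{A})$, whose norm is strictly less than one; hence $x^* = (I + T(A-\tilde{A}))^{-1}Ty$, as claimed. I do not anticipate any serious obstacle: the only subtlety is observing that the same norm bound that secures contractivity also guarantees the invertibility required to write the fixed point in closed form, so the proof is essentially a direct extension of the preceding theorem with $\tilde{A}^{-1}$ replaced by the regularized pseudoinverse $T$.
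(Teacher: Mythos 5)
Your proposal is correct and follows essentially the same route as the paper's proof: derive the closed-form update $S(x_k) = Ty - T(A-\tilde{A})x_k$, verify contractivity from the operator-norm hypothesis, apply the Banach fixed-point theorem, and solve the fixed-point equation. The only additions are the explicit normal-equation derivation of $T$ and the Neumann-series remark on invertibility of $I + T(A-\tilde{A})$, both of which the paper leaves implicit.
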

\begin{proof}
In this case we have $x_{k+1} = S(x_k) = Ty - T\varepsilon(x_{k})$. Hence for $x_1, x_2 \in X$,
\begin{equation}
    \begin{split}
        \|S(x_1) - S(x_2)\|_X &= \|T\varepsilon(x_1) - T\varepsilon(x_2)\|_X\\
        &= \|TAx_1 - T\Tilde{A}x_1 - TAx_2 + T\Tilde{A}x_2\|_X\\
        &= \|T(A-\Tilde{A})(x_1 - x_2)\|_X\\
        &\leq K\|x_1 - x_2\|_X,
    \end{split}
\end{equation}
where the Lipschitz constant $K$ is the operator norm of $T(A-\Tilde{A})$. Thus, by Banach fixed-point theorem, $S$ is a contraction and admits a unique fixed point. Let $x^*$ be a fixed point of $S$. Then,
\begin{equation}
\begin{split}
   x^* &= Ty - T(A - \Tilde{A})x^*\\ 
    \iff x^* &= \left(I + T(A - \Tilde{A})\right)^{-1}Ty
\end{split}
\end{equation}
\end{proof}
Now the solution is the minimizer of the Tikhonov functional with the approximate operator multiplied by a correction term $(I + T(A - \Tilde{A}))^{-1}$. Clearly the correction term is just identity when $A = \Tilde{A}$.


Let us make a few remarks. In the previous cases we required a linear operator to be contractive. This can be verified for example through power method \cite{matrixcomp_golubvanloan}. For nonlinear operators it is harder to verify. However, there are three heuristics we can use to enforce contractivity in that case as well. 
\begin{enumerate}[label=(\roman*)]
    \item We note that in the linear cases the Lipschitz constant (largest singular value) depends on $A - \Tilde{A}$. The better the approximation is, the more likely the sequence converges.
    \item  In the Tikhonov regularized case, the Lipschitz constant depends on $T$, which in turn depends on the regularization parameter $\lambda$. In particular, increasing $\lambda$ decreases the Lipschitz constant. This is because the singular values of $\Tilde{A}^*\Tilde{A} + \lambda I$ increase with $\lambda$, while the singular values of $(\Tilde{A}^*\Tilde{A} + \lambda I)^{-1}$ decrease with $\lambda$.
    \item  We can define a damped sequence as
    \begin{equation}
        x_{k+1} = S^\delta(x_k) = \delta_k S(x_k) + (1 - \delta_k) x_k,
    \end{equation}
    where $\delta \in \mathbb{R}$ and $S$ is defined in \eqref{eq:sequence}. The damped sequence has the same fixed point as the original sequence, if it exists. However, now the Lipschitz constant of the update rule depends on $\delta$. In theory, it is possible to find a value of $\delta$ that minimizes the Lipschitz constant at the fixed point. This trick is used for example in the Babylonian method for computing square roots \cite{sqrt2}. Using this method requires knowledge of the solution which is not available in practice. It is still possible to experiment with different values and examine the sequence's behaviour.
\end{enumerate}

In the nonlinear case with a general regularizer it is harder to make a general statement. The Lipschitz properties of the minimization operator $S$ depend on the properties of the objective function.
To ensure convergence in practice however, one could evaluate the original objective function $F(A(u), y) + \lambda R(u)$ at each iterate $x_k$ and terminate the sequence when the objective function can no longer be decreased.

\subsection{Adaptive approximation}
We may also let the approximation $\Tilde{A}$  depend on $k$, i.e. we let the approximation $\Tilde{A}_k$ change at every step of the sequence. Then the sequence is given in general form as
\begin{equation} \label{eq:adaptivesequence}
    x_{k+1} =  \delta_k \argmin_{u \in X}\left\{F(\Tilde{A}_k u, y - \varepsilon(x_k)) + \lambda R(u)\right\} + (1 - \delta_k) x_k,
\end{equation}
with step size $\delta_k$. This case can not be analyzed with the fixed-point theory since the mapping changes at each iteration. Here, one could consider a variety of approximations, but we will specifically focus on the case where the approximation is given as a first-order Taylor expansion centered at $x_k$. In that case, $\Tilde{A}_k$ is the Fréchet derivative of $A$ evaluated at $x_k$.

We note, that this choice for the adaptive approximation yields a sequence similar to the successive linearized and regularized Gauss-Newton method, see \cite{wang2021physics,margotti2022range} for recent applications. 
Nevertheless, we emphasize that the successive linearized and regularized Gauss-Newton method is a special case of the sequence formulation \eqref{eq:adaptivesequence}.

\subsubsection{Approximation by Taylor expansion}
Let us now consider local linear approximations by Taylor expansion. That is, the approximation is of the form $\Bar{A}x = A(x') + J(x')(x - x')$, where $x' \in X$ is given and $J(x)$ is the Fréchet derivative of $A$ evaluated at $x$. A convenient choice is to choose $x'$ as the current element of the sequence, yielding
\begin{equation} \label{eq:locallinear}
\begin{split}
    x_{k+1} &=  \delta_k \argmin_{u \in X}\left\{F(J_k u, y - A(x_k) + J_k x_k) + \lambda R(u)\right\} + (1 - \delta_k) x_k \\
    &= \delta_k S_k(x_k) + (1 - \delta_k) x_k \\
    &= x_k + \delta_k (S_k(x_k) - x_k) \\
    &= x_k + \delta_k p_k,
\end{split}
\end{equation}
where we denote $J(x_k) = J_k$ for clarity. We write the sequence in this form to more easily see the connection to iterative optimization algorithms such as gradient descent. Here, we also assume that data fidelity functional is of the form $F(a,b) = F(a - b)$, i.e., the mismatch between the data and model output depends on the difference between the two. We note, that this scheme is also related to surrogate approaches for nonconvex optimization, for instance the majorization-minimization algorithm \cite{majormin_lange}, which sequentially constructs surrogates that majorize the nonconvex function and minimizes the surrogates. Likewise, the adaptive sequential method constructs a convex surrogate of the nonconvex function at $x_k$. The difference is that the surrogate does not necessarily majorize the non-convex function. These surrogates are sequentially minimized, decreasing the value of the original nonconvex objective function at each step. This is stated in the following theorem. For clarity of the presentation, we define $L(u) = F(A(u), y) + \lambda R(u)$ and $L_k^s(u) = F(J_k u, y - A(x_k) + J_k x_k) + \lambda R(u)$.

\begin{figure*}[!t]
\centering
\begin{minipage}{.5\textwidth}
  \centering
  \includegraphics[scale=.5]{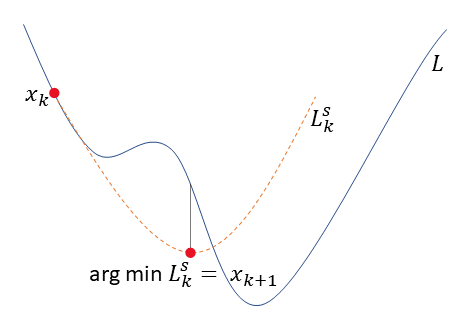}
\end{minipage}%
\begin{minipage}{.5\textwidth}
  \centering
  \includegraphics[scale=.5]{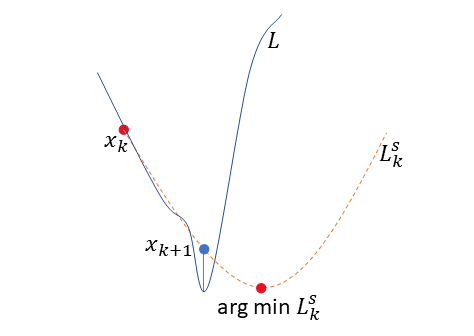}
\end{minipage}
\caption{Illustration of the surrogate function $L_k^s$ that results from the local linear approximation. We may be able to get past some local minima, as shown in the picture on the left. The picture on the right shows that not controlling the step size might actually result in increasing the function value.}
\label{fig:surrogate}
\end{figure*}

\begin{theorem}[Descend property] \label{thm:descend}
    Let $A: D(X) \to Y$ be a nonlinear operator, $F: Y\times Y \to \mathbb{R}_+$ a convex data fidelity term and $R: X \to \mathbb{R}_+$ a convex regularization functional. Moreover, let $A$, $F$ and $R$ be differentiable. Then for small enough $\delta_k \in (0, 1]$, the iterates of the sequence \eqref{eq:locallinear} satisfy
    \begin{equation*}
        L(x_{k+1}) \leq L(x_k).
    \end{equation*} 
\end{theorem}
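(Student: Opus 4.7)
The plan is to exploit two compatibility properties between the surrogate $L_k^s$ and the original objective $L$ at the linearization point $x_k$, and then use these to turn the descent property of $p_k$ for the convex surrogate into a descent property for $L$.

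First I would verify the zeroth- and first-order matching at $x_k$. Using the assumption $F(a,b) = F(a-b)$, a direct substitution gives
\begin{equation*}
L_k^s(x_k) = F(J_k x_k - y + A(x_k) - J_k x_k) + \lambda R(x_k) = F(A(x_k) - y) + \lambda R(x_k) = L(x_k).
\end{equation*}
Differentiating both $L$ and $L_k^s$ via the chain rule and using that $J_k = A'(x_k)$ is the Fréchet derivative of $A$ at $x_k$, I get
\begin{equation*}
\nabla L_k^s(x_k) = J_k^{*}\nabla F(A(x_k)-y) + \lambda \nabla R(x_k) = \nabla L(x_k).
\end{equation*}
So the surrogate and the original objective have the same value and the same gradient at $x_k$; this is the crucial link.

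Next I would show that $p_k = S_k(x_k) - x_k$ is a descent direction for $L$ at $x_k$. Because $F$ is convex and composes with the linear map $u \mapsto J_k u$, and $R$ is convex, the surrogate $L_k^s$ is convex in $u$. Since $S_k(x_k)$ minimizes $L_k^s$, convexity gives
\begin{equation*}
\langle \nabla L_k^s(x_k), p_k \rangle_X \leq L_k^s(S_k(x_k)) - L_k^s(x_k) \leq 0,
\end{equation*}
and by the gradient matching above this inner product equals $\langle \nabla L(x_k), p_k \rangle_X$.

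Finally I would close the argument with a first-order Taylor expansion of the differentiable map $L$ around $x_k$:
\begin{equation*}
L(x_k + \delta_k p_k) = L(x_k) + \delta_k \langle \nabla L(x_k), p_k \rangle_X + o(\delta_k) \quad \text{as } \delta_k \to 0^{+}.
\end{equation*}
If the inner product is strictly negative, then for all sufficiently small $\delta_k \in (0,1]$ the $o(\delta_k)$ remainder is dominated and $L(x_{k+1}) < L(x_k)$. If it is zero, then $x_k$ already minimizes $L_k^s$ and one may take $p_k = 0$ (or any $\delta_k$), so $L(x_{k+1}) = L(x_k)$. Either way the claimed inequality $L(x_{k+1}) \leq L(x_k)$ holds.

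The main obstacle is really the first step: the descent-direction conclusion rests entirely on the gradients of $L$ and $L_k^s$ agreeing at $x_k$, and this agreement uses the specific structure $F(a,b)=F(a-b)$ together with the fact that the Taylor expansion is first-order consistent. Once that identification is in place, convexity of the surrogate and a standard Taylor expansion of $L$ deliver the rest; no Lipschitz constants or step-size bounds need be made explicit since the statement only asserts existence of a small enough $\delta_k$.
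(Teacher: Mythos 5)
Your proof is correct and follows essentially the same route as the paper's: convexity of the surrogate gives $\langle \nabla L_k^s(x_k), p_k \rangle_X \leq L_k^s(S_k(x_k)) - L_k^s(x_k) \leq 0$, the structure $F(a,b)=F(a-b)$ gives the gradient matching $\nabla L_k^s(x_k) = \nabla L(x_k)$, and a first-order Taylor expansion of $L$ at $x_k$ finishes the argument. If anything you are slightly more careful than the paper, since you use an $o(\delta_k)$ remainder (needing only differentiability rather than the paper's $\mathcal{O}(\delta_k^2)$ bound) and you explicitly dispose of the degenerate case $\langle \nabla L(x_k), p_k \rangle_X = 0$, which the paper's final step glosses over.
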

\begin{proof}
    Since $J_k$ is linear and $F$ and $R$ are convex, $L_k^s$ is convex. From the definition of convexity, we have
    \begin{equation*}
        \begin{split}
            L_k^s(x_{k+1}) &= L_k^s(\delta_k S_k(x_k) + (1 - \delta_k) x_k) \\
    &\leq \delta_k L_k^s(S_k(x_k)) + (1 - \delta_k)L_k^s(x_k) \\
    &= \delta_k (L_k^s(S_k(x_k)) - L_k^s(x_k)) + L_k^s(x_k) \\
    &\leq L_k^s(x_k)
        \end{split}
    \end{equation*}
for all $\delta_k \in [0, 1]$, since $S_k(x_k)$ minimizes $L_k^s$. From convexity also follows
\begin{equation*}
\begin{split}
    \langle \nabla L_k^s(x_k), x_{k+1} - x_k \rangle_X &\leq L_k^s(x_{k+1}) - L_k^s(x_{k}) \leq 0.   
\end{split}
\end{equation*}
It is left to verify that $\nabla L(x_k) = \nabla L_k^s(x_k)$. We have
\begin{equation*}
    \nabla L_k^s(u) = J_k^* \nabla F(J_k u, y - A(x_k) + J_k x_k) + \lambda \nabla R(u).
\end{equation*}
Evaluating the gradient at $x_k$ gives
\begin{equation*}
\begin{split}
    \nabla L_k^s(x_k) &= J_k^* \nabla F(J_k x_k, y - A(x_k) + J_k x_k) + \lambda \nabla R(x_k) \\
    &= J_k^* \nabla F(J_k x_k - y + A(x_k) - J_k x_k) + \lambda \nabla R(x_k) \\
    &= J_k^* \nabla F(A(x_k) - y) + \lambda \nabla R(x_k) \\
    &= J_k^* \nabla F(A(x_k), y) + \lambda \nabla R(x_k) \\
    &= \nabla L(x_k),    
\end{split}
\end{equation*}
which means
\begin{equation*}
    \langle \nabla L_k^s(x_k), x_{k+1} - x_k \rangle_X = \langle \nabla L(x_k), x_{k+1} - x_k \rangle_X \leq 0.
\end{equation*}
Hence by Taylor's theorem
\begin{equation*}
\begin{split}
    L(x_{k+1}) = L(x_k + \delta_k p_k) = L(x_k) + \langle \nabla L(x_k), \delta_k p_k \rangle_X + \mathcal{O}(\delta_k^2).
\end{split}
\end{equation*}
Since the remainder depends on $\delta_k$ at least quadratically, we can always find small enough $\delta_k$ such that $\langle \nabla L(x_k), \delta_k p_k \rangle_X + \mathcal{O}(\delta_k^2) = \langle \nabla L(x_k), x_{k+1} - x_k \rangle_X + \mathcal{O}(\delta_k^2) \leq 0$, which finally gives $L(x_{k+1}) \leq L(x_k)$.
\end{proof}

We emphasize that the approximative model for the sequence \eqref{eq:adaptivesequence}, does not necessarily need to be the Fréchet derivative of the exact model. It only needs to be chosen such that the value and gradient of the surrogate at the current iterate matches those of the nonconvex functional.
\subsubsection{Convergence to a critical point}
In this section we follow \cite{attouch2013convergence} and hence consider the finite dimensional case $Y = \mathbb{R}^n$ and $X = \mathbb{R}^m$. The descent property itself does not guarantee that the sequence converges to a local minimizer or even a critical point of the nonconvex functional. In general, it is possible for the sequence to have multiple limit points \cite{absil2005convergence}. Thus it is necessary to consider functions that possess certain structure. In the nonconvex optimization literature the Kurdyka-\L ojasiewicz (KL) property is often exploited to prove convergence results (see \cite{attouch2013convergence} for the definition), as it is a general property satisfied by many classes of functions. It is used to prove that the trajectory defined by  the sequence has finite length.

In the following we assume that the functional $L$ satisfies the KL-property and has $K$-Lipschitz gradient. For the sequence to converge to a critical point, the following conditions need to hold:
\vspace{.5cm}
\begin{enumerate}[label=(\roman*)]
    \item $\langle \nabla L(x_k), x_{k+1} - x_k \rangle_X + \frac{a}{2}\|x_{k+1} - x_k\|_X^2 \leq 0$,
    \item $\|\nabla L(x_k)\|_X \leq b\|x_{k+1} - x_k\|_X$,
    \item There exists a subsequence $(x_{k_j})_{j \in \mathbb{N}}$ and $x^* \in X$ such that $x_{k_j}\to x^*$ and $L(x_{k_j}) \to L(x^*)$ as $j \to \infty$, 
\end{enumerate}
\vspace{.5cm}
for some positive $a$ and $b$ such that $a > K$. In \cite{attouch2013convergence} (pages 106--107), fulfilling these conditions is shown to result in convergence to a critical point. The following theorem shows what assumptions are needed for the conditions to hold.

\begin{theorem}
    Let $A: D(X) \subset X \to Y$ be a nonlinear operator, $F: Y\times Y \to \mathbb{R}_+$ a convex data fidelity term and $R: X \to \mathbb{R}_+$ a convex regularization functional. Furthermore, let $A$, $F$ and $R$ be chosen such that $L_k^s$ is $m$-strongly convex with $K_s$-Lipschitz gradient and that $L$ has a $K$-Lipschitz gradient such that $m > K$. Let $L$ satisfy the Kurdyka-\L ojasiewicz property. Then for any bounded sequence $(x_k)_{k \in \mathbb{N}}$ generated by Eq. \eqref{eq:locallinear}, conditions (i), (ii) and (iii) hold.
\end{theorem}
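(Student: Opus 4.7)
The plan is to verify conditions (i), (ii), (iii) separately, leaning on the identity $\nabla L(x_k) = \nabla L_k^s(x_k)$ already established inside the proof of Theorem~\ref{thm:descend}. The surrogate $L_k^s$ is, by hypothesis, $m$-strongly convex with $K_s$-Lipschitz gradient, and $x_{k+1}$ is a damped step toward its minimizer $S_k(x_k)$, so the required inequalities on $L$ at $x_k$ should follow essentially by transferring these strong-convexity/smoothness properties of the surrogate through the gradient identity.

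For (i) I would apply the $m$-strong convexity inequality to $L_k^s$ at the pair $(x_k, x_{k+1})$,
\begin{equation*}
    L_k^s(x_{k+1}) \geq L_k^s(x_k) + \langle \nabla L_k^s(x_k), x_{k+1} - x_k \rangle_X + \frac{m}{2}\|x_{k+1} - x_k\|_X^2,
\end{equation*}
and combine it with the descent $L_k^s(x_{k+1}) \leq L_k^s(x_k)$ shown in the proof of Theorem~\ref{thm:descend} (which used only convexity of $L_k^s$ and the minimizer property of $S_k(x_k)$, and still applies). Rearranging and substituting $\nabla L_k^s(x_k) = \nabla L(x_k)$ gives (i) with $a = m$, and the hypothesis $m > K$ is precisely what makes the required $a > K$ available.

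For (ii) I would exploit the first-order optimality $\nabla L_k^s(S_k(x_k)) = 0$ together with the $K_s$-Lipschitz gradient of $L_k^s$ to get $\|\nabla L(x_k)\|_X = \|\nabla L_k^s(x_k) - \nabla L_k^s(S_k(x_k))\|_X \leq K_s\|x_k - S_k(x_k)\|_X$. Since $x_{k+1} - x_k = \delta_k(S_k(x_k) - x_k)$, this rewrites as $\|\nabla L(x_k)\|_X \leq (K_s/\delta_k)\|x_{k+1} - x_k\|_X$, giving (ii) with $b = K_s/\delta_{\min}$ whenever the step sizes obey a uniform lower bound $\delta_k \geq \delta_{\min} > 0$. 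Condition (iii) is then immediate from finite dimensionality: boundedness of $(x_k)$ produces a convergent subsequence $x_{k_j} \to x^*$ by Bolzano--Weierstrass, and continuity of $L$ (since $\nabla L$ is Lipschitz) yields $L(x_{k_j}) \to L(x^*)$.

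The main obstacle I anticipate is justifying the uniform lower bound on $\delta_k$ used in step (ii); without it the constant $b$ may blow up and the chain-of-implications argument of \cite{attouch2013convergence} breaks down. The theorem as stated does not spell out a step-size rule, so the clean resolution is either to fold $\delta_k \geq \delta_{\min} > 0$ into the standing assumptions on the scheme, or to specify an Armijo-type backtracking and show that the combination of strong convexity and $K_s$-Lipschitz gradient of $L_k^s$ forces backtracking to terminate at step sizes bounded away from zero.
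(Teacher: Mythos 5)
Your proof follows the same route as the paper's: (i) via $m$-strong convexity of $L_k^s$ combined with the descent inequality and the gradient identity $\nabla L_k^s(x_k) = \nabla L(x_k)$ from Theorem~\ref{thm:descend}, (ii) via $\nabla L_k^s(S_k(x_k)) = 0$ and the $K_s$-Lipschitz gradient of the surrogate, and (iii) via Bolzano--Weierstrass and continuity of $L$. Your observation that a uniform lower bound $\delta_k \geq \delta_{\min} > 0$ is needed to extract a single constant $b$ in (ii) is a legitimate point that the paper's proof leaves implicit (it stops at the bound $\frac{K_s}{\delta_k}\|x_{k+1} - x_k\|_X$ without addressing it).
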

\begin{proof}
    (i): Since $L_k^s$ is $m$-strongly convex, we have
    \begin{equation*}
        \langle \nabla L_k^s(x_k), x_{k+1} - x_k \rangle_X \leq L_k^s(x_{k+1}) - L_k^s(x_k) - \frac{m}{2}\|x_{k+1} - x_k\|_X^2.
    \end{equation*}
    We know from Theorem \ref{thm:descend} that $L_k^s(x_{k+1}) - L_k^s(x_k) \leq 0$ and $\nabla L_k^s(x_k) = \nabla L(x_k)$. Thus we conclude that
    \begin{equation*}
        \langle \nabla L(x_k), x_{k+1} - x_k \rangle_X + \frac{m}{2}\|x_{k+1} - x_k\|_X^2 \leq 0.
    \end{equation*}
    (ii): We estimate
    \begin{equation*}
    \begin{split}
        \|\nabla L(x_k)\|_X &= \|\nabla L_k^s(x_k)\|_X \\
        &= \|\nabla L_k^s(x_k) - \nabla L_k^s(S_k(x_k))\|_X \hspace{3cm} (\nabla L_k^s(S_k(x_k)) = 0) \\
        &= \|\nabla L_k^s(x_k) - \nabla L_k^s(\delta_k^{-1}(x_{k+1} - x_k) + x_k)\|_X \hspace{.7cm} (\mathrm{Rearrange \ \eqref{eq:locallinear}}) \\
        &\leq K_s \|x_k - (\delta_k^{-1}(x_{k+1} - x_k) + x_k)\|_X \hspace{2.05cm} (\mathrm{Lipschitz \ gradient}) \\
        &= K_s \|\delta_k^{-1}(x_k - x_{k+1})\|_X \\
        &= \frac{K_s}{\delta_k}\|x_{k+1} - x_{k}\|_X \\
    \end{split}
    \end{equation*}
    (iii): This condition follows from the continuity of $L$ and boundedness of the sequence $(x_k)_{k \in \mathbb{N}}$.
\end{proof}

\begin{remark}
For the convergence proof in \cite{attouch2013convergence} it is required that the strong convexity constant $m$ of the surrogate functional is larger than the Lipschitz constant $K$ of the gradient of the nonconvex functional. Our formulation of the sequence \eqref{eq:adaptivesequence} does not guarantee this. However, when the data fidelity is given by the squared $\ell^2$-norm, the surrogate functional can always be "squeezed" to increase the constant. We only need to make sure that $\nabla L_k^s(x_k) = \nabla L(x_k)$.
\end{remark}


Figure \ref{fig:surrogate} illustrates the convex surrogate obtained with the local linear approximation. It might let us escape some local minima. However, taking too long steps may end up increasing the value of the function we try to minimize. The optimal step size can be chosen for example by line search, that is, we choose such $\delta$ that minimizes the objective function:
\begin{equation}
    \delta_k = \argmin_{\delta^* \in [0, 1]} L(\delta^* \argmin_{u \in X} \left\{ L_k^s(u) \right\} + (1 - \delta^*) x_k)
\end{equation}

\subsubsection{Connection to the Gauss-Newton method}
The Gauss-Newton method was originally designed for solving nonlinear least-squares problems \cite{wrightoptimization}. It is an iterative optimization method involving linearization of the nonlinear model at every iterate. In inverse problems, it is most often used to solve variational problems of the form
\begin{equation}
    x^* = \argmin_{u \in X}\left\{\frac{1}{2}\|A(u) - y\|_Y^2 + \lambda R(u)\right\},
\end{equation}
where $R$ must be twice continuously differentiable. The updating step is given as \cite{wrightoptimization, Schweigergaussnewton}
\begin{equation}
    x_{k+1} = x_k + \delta_k (J_k^*J_k + \lambda \nabla^2 R_k)^{-1}[J_k^*(y - A(x_k)) - \lambda \nabla R_k],
\end{equation}
where $J_k$ is the Fréchet derivative of $A$ evaluated at $x_k$, $\nabla R_k$ is the gradient and $\nabla^2 R_k$ is the second derivative of $R$ evaluated at $x_k$ and $\delta_k$ is a step size chosen with line search. It turns out that with certain choices the adaptive sequential method is equivalent to the Gauss-Newton algorithm. In particular, we have to always choose $F$ as the $L^2$ norm. Then, if we choose $R = 0$, requiring that $J_k^*J_k$ is invertible, the adaptive sequential update becomes
\begin{equation}
    \begin{split}
    x_{k+1} &= \delta_k (J_k^*J_k)^{-1}J_k^*(y - A(x_k) + J_k x_k) + (1 - \delta_k) x_k \\
    &= x_k + \delta_k (J_k^*J_k)^{-1}J_k^*(y - A(x_k)),
    \end{split}
\end{equation}
which is exactly the same as Gauss-Newton update. Further, if we choose $R(u) = \frac{1}{2}\|u\|_X^2$, the adaptive sequential update is
\begin{equation} \label{eq:adaptive_quad}
    x_{k+1} = \delta_k (J_k^*J_k + \lambda I)^{-1}J_k^*(y - A(x_k) + J_k x_k) + (1 - \delta_k) x_k.
\end{equation}
The connection to Gauss-Newton is a bit more tricky to see here. The Gauss-Newton update can be written as
\begin{equation}
    \begin{split}
    x_{k+1} &= x_k + \delta_k (J_k^*J_k + \lambda I)^{-1}[J_k^*(y - A(x_k)) - \lambda x_k] \\
    &= (I - \lambda \delta_k (J_k^*J_k + \lambda I)^{-1})x_k + \delta_k (J_k^*J_k + \lambda I)^{-1}J_k^*(y - A(x_k)) \\
    &= (1 - \delta_k) x_k + \delta_k J_k^*(J_k J_k^* + \lambda I)^{-1}J_k x_k + \delta_k (J_k^*J_k + \lambda I)^{-1}J_k^*(y - A(x_k)) \\
    &= (1 - \delta_k) x_k + \delta_k (J_k^*J_k + \lambda I)^{-1} J_k^*J_k x_k + \delta_k (J_k^*J_k + \lambda I)^{-1}J_k^*(y - A(x_k)) \\
    &= \delta_k (J_k^*J_k + \lambda I)^{-1} J_k^*(y - A(x_k) + J_k x_k) + (1 - \delta_k) x_k,
    \end{split}
\end{equation}
which is the same as the adaptive sequential update in \eqref{eq:adaptive_quad}. The third row in the above equation follows by Woodbury formula \cite{woodbury}
\begin{equation}
    (J_k^*J_k + \lambda I)^{-1} = \lambda^{-1}I - \lambda^{-2}J_k^*(I + \lambda^{-1} J_k J_k^*)^{-1}J_k
\end{equation}
 and the fourth row by push-through identity \cite{inverseofsum_henderson}
 \begin{equation}
     J_k^*(J_k J_k^* + \lambda I)^{-1} = (J_k^*J_k + \lambda I)^{-1}J_k^*.
 \end{equation}
In general, the adaptive sequential method and Gauss-Newton algorithm are the same for quadratic objective functions. The reason for this is that one step of Gauss-Newton is enough to minimize a quadratic function.

\subsection{Non-differentiable case}
In the previous section we required the data fidelity $F$ and regularization term $R$ to be continuously differentiable functions. However, in principle, the sequential formulation allows the use of any data fidelity and regularization term. In practice, and as we will show in the experiments, we observe good performance for non-differentiable $L^1$ data fidelity and total variation (TV) regularization. This implies that the theory could be modified to allow for non-differentiable objective functions and possibly non-convex data fidelity or regularizer. As these are not the focus of this study, we leave the analysis of other cases for future studies.

\begin{figure}[!t]
    \centering
    \includegraphics[width=\linewidth]{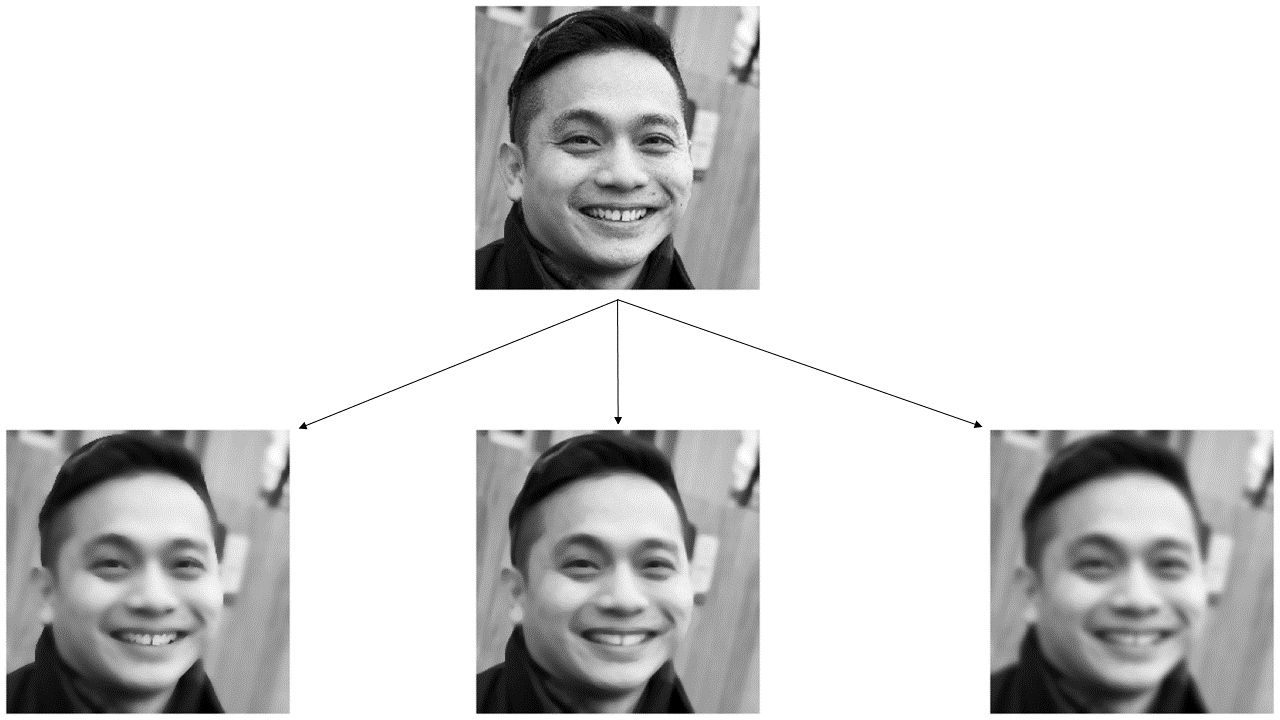}
    \caption{Illustration of how the operators considered in this work distort the ground truth image (top). Nonlinear diffusion (NLD) (bottom left), curvature flow (CF) (bottom middle) and linear diffusion (LD) (bottom right).}
    \label{fig:approx_illustration}
\end{figure}

\section{Models and implementation} \label{sec:modelsandimplementation}
We test the sequential correction method with diffusion-type operators that have been extensively used in imaging. A general diffusion operator $A:x_0\mapsto x_T$ on $\mathbb{R}^d$ for a fixed time interval $(0, T]$ is defined by the partial differential equation 
\begin{equation}
    \begin{cases}
        \partial_t x &= \nabla \cdot (\gamma(x) \nabla x) \\
        x &= x_0 \ \ \mathrm{when} \ t=0,
    \end{cases}
\end{equation}
for diffusivity $\gamma$. If we let $\gamma = 1$, the operator becomes linear and the model corresponds to convolution with a Gaussian kernel. We can also consider a model with varying diffusivity. One possibility is to use the Perona-Malik filter such that $\gamma(x) = (1 + |\nabla x|^2/\kappa^2)^{-1}$, where $\kappa > 0$ is a contrast parameter \cite{peronamalik}. Numerically the operator can be implemented by explicit iterative time-stepping algorithm. Starting from initial value $x_0$, the $k$-th iteration is defined as
\begin{equation}
    x_{k+1} = x_k + \delta t \nabla \cdot \gamma(x_k) \nabla x_k,
\end{equation}
with step size $\delta t$. We also consider a related curvature flow operator. On $\mathbb{R}^d$ and fixed time interval $(0, T]$ the operator is defined as \cite{diffusion_weicker}
\begin{equation}
    \begin{cases}
        \partial_t x &= |\nabla x| \nabla \cdot \left(\frac{\nabla x}{|\nabla x|} \right) \\
        x &= x_0 \ \ \mathrm{when} \ t=0.
    \end{cases}
\end{equation}
Similarly as with the diffusion operator, the curvature flow operator can be numerically implemented with iterative time-stepping algorithm
\begin{equation}
    x_{k+1} = x_k + \delta t (|\nabla x| + \psi) \nabla \cdot \left( \frac{\nabla x_k}{|\nabla x_k| + \psi}\right)
\end{equation}
with step size $\delta t$ and threshold $\psi$ for differentiability and stability by avoiding division with a number too close to zero. The models are referred to as NLD, CF and LD, for nonlinear diffusion, curvature flow and linear diffusion, respectively. Figure \ref{fig:approx_illustration} illustrates the effect of the three models when applied on an image.

\subsection{Numerical experiments}
We used the developed sequential model correction method to restore images distorted by the nonlinear diffusion and curvature flow operators. For the fixed method we used a linear diffusion operator (constant diffusivity) to approximate the nonlinear operators in both cases. For the adaptive method we used the first-order Taylor expansion centered at the current iterate as an approximation. We set the contrast parameter in Perona-Malik filter to $\kappa = 0.1$ and the threshold parameter in the curvature flow model to $\psi = 0.001$. The step size $\delta t$ was set to $0.1$ and number of steps to 15 in all time-stepping algorithms. Either $3$ \% of Gaussian noise or $4$ \% of impulse noise was added to the distorted image. We numerically evaluated the reconstruction quality over a batch of 32 images, measuring the correspondence to ground truth images using peak signal-to-noise ratio (PSNR) and structural similarity index measure (SSIM) \cite{ssimwang}. We also investigated empirically the convergence of the methods by computing the value of the data fidelity at each iteration of the sequence. 

We compared our methods to the case where the approximation error was not dealt with, i.e., the reconstruction was computed using only the approximate model. We also compared our methods to AEM, where the approximation error is assumed to have a Gaussian distribution. The evaluation was done separately for the nonlinear diffusion and curvature flow models with Gaussian and impulse (salt and pepper) noise which assigns random pixels a value zero or one \cite{digital_image_processing}. For the Gaussian noise case we assumed squared $L^2$ data fidelity and for the impulse noise case we assumed $L^1$ data fidelity. In all experiments we used TV regularization which penalizes the absolute value of the image gradient, favoring piecewise constant reconstructions \cite{rof1992}. The regularization parameter was chosen separately for different methods by computing reconstructions for a test image with different values of $\lambda$ and selecting the value that yielded the highest SSIM.

\begin{algorithm}[!t]
    \caption{Sequential model correction with primal-dual method for $L^2/L^1$ data fidelity and TV regularization}
    \label{alg:seq}
    \begin{algorithmic}[1]
            \State Require $x_0, \lambda$
            \State $u_0 \gets x_0$, $\Bar{u}_0 \gets x_0$, $p_0 \gets 0$, $q_0 \gets 0$, $k \gets 0$, $\theta \gets 1$
            \While{sequence not converged} 
                \State $K_k \gets \|(\Tilde{A}_k, \nabla)\|_\mathrm{op}$
                \State $\tau_k \gets 1/K_k$
                \State $\sigma_k \gets 1/K_k$
                \State $\varepsilon(x_k) = A(x_k) - \Tilde{A}x_k$
                \State $t \gets 0$
                \While{primal-dual not converged}
                    \State $r_{t+1} = \Tilde{A}_k \Bar{u}_t - y + \varepsilon(x_{k})$
                    \State $p_{t+1} \gets (p_t + \sigma_k r_{t+1})/(1 + \sigma_k)$ \Comment{$L^2$}
                    \State $p_{t+1} \gets (p_t + \sigma_k r_{t+1})/\max(1_Y, |p_t + \sigma_k r_{t+1}|)$ \Comment{$L^1$}
                    \State $q_{t+1} \gets \lambda(q_t + \sigma_k \nabla \Bar{u}_t)/\max(\lambda 1_X, |q_t + \sigma_k \nabla \Bar{u}_t|)$
                    \State $u_{t+1} \gets u_t - \tau_k \Tilde{A}_k^* p_{t+1} + \tau_k \mathrm{div}(q_{t+1})$
                    \State $\Bar{u}_{t+1} \gets u_{t+1} + \theta (u_{t+1} - u_t)$
                    \State $t \gets t + 1$
                \EndWhile
            \State Perform line search to find $\delta_k$
            \State $x_{k+1} \gets (1 - \delta_k)\Bar{u}_t + \delta_k x_k$
            \State $k \gets k + 1$
            \State $\Bar{u}_0 \gets \Bar{u}_t$, $u_0 \gets u_t$, $p_0 \gets p_t$, $q_0 \gets q_t$
            \EndWhile
            \State \textbf{return} $x_k$
    \end{algorithmic}
\end{algorithm}

\subsection{Implementation}
The algorithms for sequential model correction and AEM were implemented with Python. The algorithm for sequential model correction with squared $L^2$- or $L^1$-data fidelity and TV regularization is presented in Algorithm \ref{alg:seq}. The subproblems requiring convex optimization were solved with primal-dual methods \cite{chambollepock2011, chambollepockappl}. The Jacobians for local linear approximations were computed via Jacobian-vector and vector-Jacobian products using the autograd library in Pytorch \cite{pytorch}. The computations were performed with a workstation with two $2.20$ GHz processors and Nvidia Quadro P4000 GPU.

\subsection{Data}
For testing our methods we used the FFHQ dataset \cite{dataset}. The dataset consists of 70000 color images of aligned and cropped faces of size $1024^2$. The data was preprocessed by converting the images to grayscale and downsampling to size $256^2$ to make the computations feasible. The dataset was also used to compute the mean vector and covariance matrix of the approximation error required by AEM.

\section{Results} \label{sec:results}

\begin{table}[!t]
\begin{center}
\begin{tabular}{|c|c|c|c|c|}
\hline
    & \multicolumn{2}{|c|}{NLD (Gaussian)} & \multicolumn{2}{|c|}{NLD (Impulse)} \\
    \cline{2-5}
    & PSNR & SSIM & PSNR & SSIM \\   
    \hline 
    No correction & 26.96 (2.58) & 0.87 (0.039) & 24.58 (2.61) & 0.87 (0.042)\\
    Fixed seq. & 28.48 (2.25) & 0.88 (0.037) & 30.05 (2.53) & 0.93 (0.022)\\
    Adaptive seq. & 29.32 (1.72) & 0.89 (0.031) & 29.62 (2.08) & 0.93 (0.020)\\
    AEM & 27.11 (2.17) & 0.85 (0.048) & - & -\\
    Data & 27.40 (0.92) & 0.80 (0.020) & 18.77 (0.27) & 0.48 (0.033) \\
\hline
\end{tabular}
\captionof{table}{Average (standard deviation) peak signal-to-noise ratio (PSNR) and structural similarity index measure (SSIM) over reconstructions of 32 images distorted by the nonlinear diffusion operator. The results are presented separately for Gaussian noise and impulse noise.}\label{table:nld}
\end{center}
\end{table}

\begin{table}[h!]
\begin{center}
\begin{tabular}{|c|c|c|c|c|}
\hline
    & \multicolumn{2}{|c|}{CF (Gaussian)} & \multicolumn{2}{|c|}{CF (Impulse)} \\
    \cline{2-5}
    & PSNR & SSIM & PSNR & SSIM \\   
    \hline 
    No correction & 27.39 (2.38) & 0.87 (0.042) & 26.21 (2.32) & 0.88 (0.038)\\
    Fixed seq. & 28.01 (2.36) & 0.88 (0.042) & 29.97 (2.43) & 0.93 (0.025)\\
    Adaptive seq. & 28.18 (2.26) & 0.88 (0.040) & 30.46 (2.86) & 0.93 (0.029)\\
    AEM & 26.42 (2.04) & 0.85 (0.046) & - & -\\
    Data & 26.08 (1.65) & 0.77 (0.027) & 18.48 (0.46) & 0.46 (0.024) \\
\hline
\end{tabular}
\captionof{table}{Average (standard deviation) peak signal-to-noise ratio (PSNR) and structural similarity index measure (SSIM) over reconstructions of 32 images distorted by the curvature flow operator. The results are presented separately for Gaussian noise and impulse noise.}\label{table:cf}
\end{center}
\end{table}

\begin{table}[h!]
\begin{center}
    \begin{tabular}{|c|c|c|c|c|}
\hline
         & \multicolumn{4}{|c|}{Time in seconds} \\
         \cline{2-5}
         & NLD (Gaussian) & NLD (Impulse) & CF (Gaussian) & CF (Impulse) \\
         \hline
        No correction & 10.84 (2.04) & 28.06 (5.57) & 9.05 (1.26) & 26.35 (4.70)\\
        Fixed seq. & 18.36 (4.56) & 230.24 (71.47) & 13.74 (3.02) & 123.27 (32.47)\\
        Adaptive seq. & 153.75 (84.22) & 373.76 (123.52) & 71.58 (15.76) & 301.29 (84.81)\\
        AEM & 91.51 (15.43) & - & 237.48 (26.98) & -\\
\hline
    \end{tabular}
    \captionof{table}{Average computation times (standard deviations) of different methods over 32 images.}\label{table:comptimes}
\end{center}
\end{table}

We start by examining the quantitative results. Tables \ref{table:nld} and \ref{table:cf} show the averaged PSNR and SSIM for reconstructions over a batch of 32 images with the NLD and CF models, respectively. The general trend is clear: using the fixed sequential correction method gives a clear improvement compared to not correcting the model at all. With NLD model and Gaussian noise the difference in PSNR is about 1.5 dB and 0.01 in SSIM. The difference with impulse noise is even more pronounced with about 5.5 dB in PSNR and 0.06 in SSIM. With NLD model and Gaussian noise the adaptive method gives a slight improvement over the fixed method. Interestingly, for impulse noise the effect is the opposite, a matter we discuss in Section \ref{sec:discussion}. For the CF model the difference between fixed and adaptive methods is negligible. The performance of AEM is slightly lower than using no correction with both models.

We then take a qualitative look at the reconstructed images. Figure \ref{fig:nld_gaussian} shows reconstructions with different correction methods for the NLD model approximated with a LD model with Gaussian noise. Without any correction the edges in the image are oversharpened since the LD model assumes higher diffusivity over the edges. The adaptive method is able to recover a bit more details than the fixed method, for example in the earring. Qualitatively the reconstruction with AEM is quite close to the reconstruction with fixed approximation, and at some parts a bit more blurry. Figure \ref{fig:cf_gaussian} shows the same reconstructions for the CF model. Here the difference between fixed and adaptive methods is not so clear. The AEM reconstruction is clearly the worst with the eyes not properly recovered. Reconstructions for the NLD model and impulse noise are shown in Figure \ref{fig:nld_impulse}. Here, correcting the model greatly increases the ability to separate details in the image. The fixed correction creates some wavy artefacts near the edges. The adaptive correction takes care of the artefacts and the text on the hat is more clearly visible. However, there is still some noise left in some parts of the image. We will discuss this phenomenon in Section \ref{sec:discussion}. Figure \ref{fig:cf_impulse} shows the reconstructions with CF model and impulse noise. Again, correcting the model increases the amount of details in the image. Again, the fixed correction introduces some artefacts at the edges while the adaptive correction takes care of the artefacts.

\begin{figure*}[!t]
    \centering
    \includegraphics[width=0.9\linewidth]{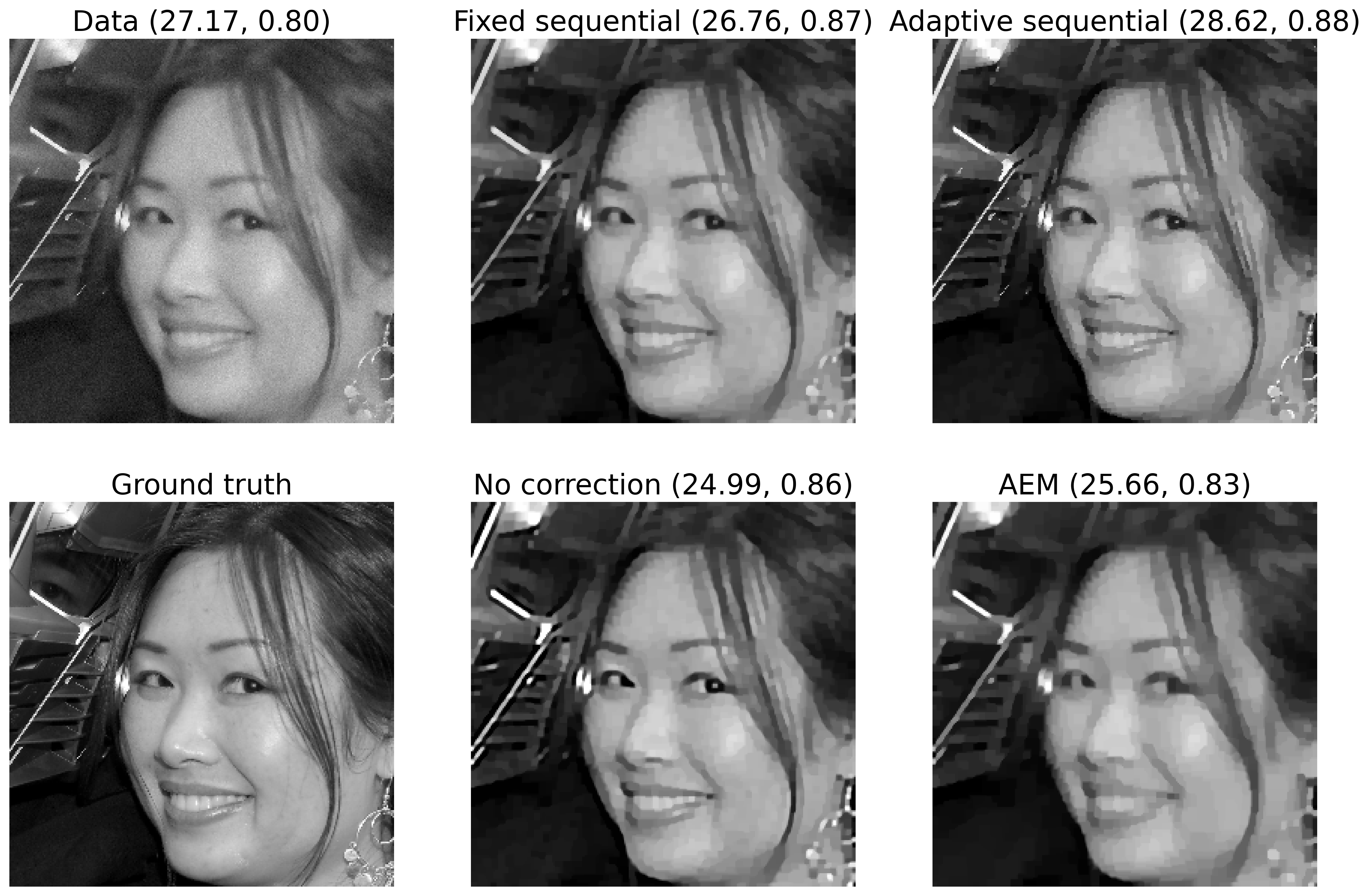}
    \caption{Reconstructions for the nonlinear diffusion model approximated with linear diffusion model with $3 \%$ Gaussian noise. The numbers in parentheses indicate peak signal to noise ratio (dB) and structural similarity index measure.}
    \label{fig:nld_gaussian}
\end{figure*}

\begin{figure*}[!t]
    \centering
    \includegraphics[width=0.9\linewidth]{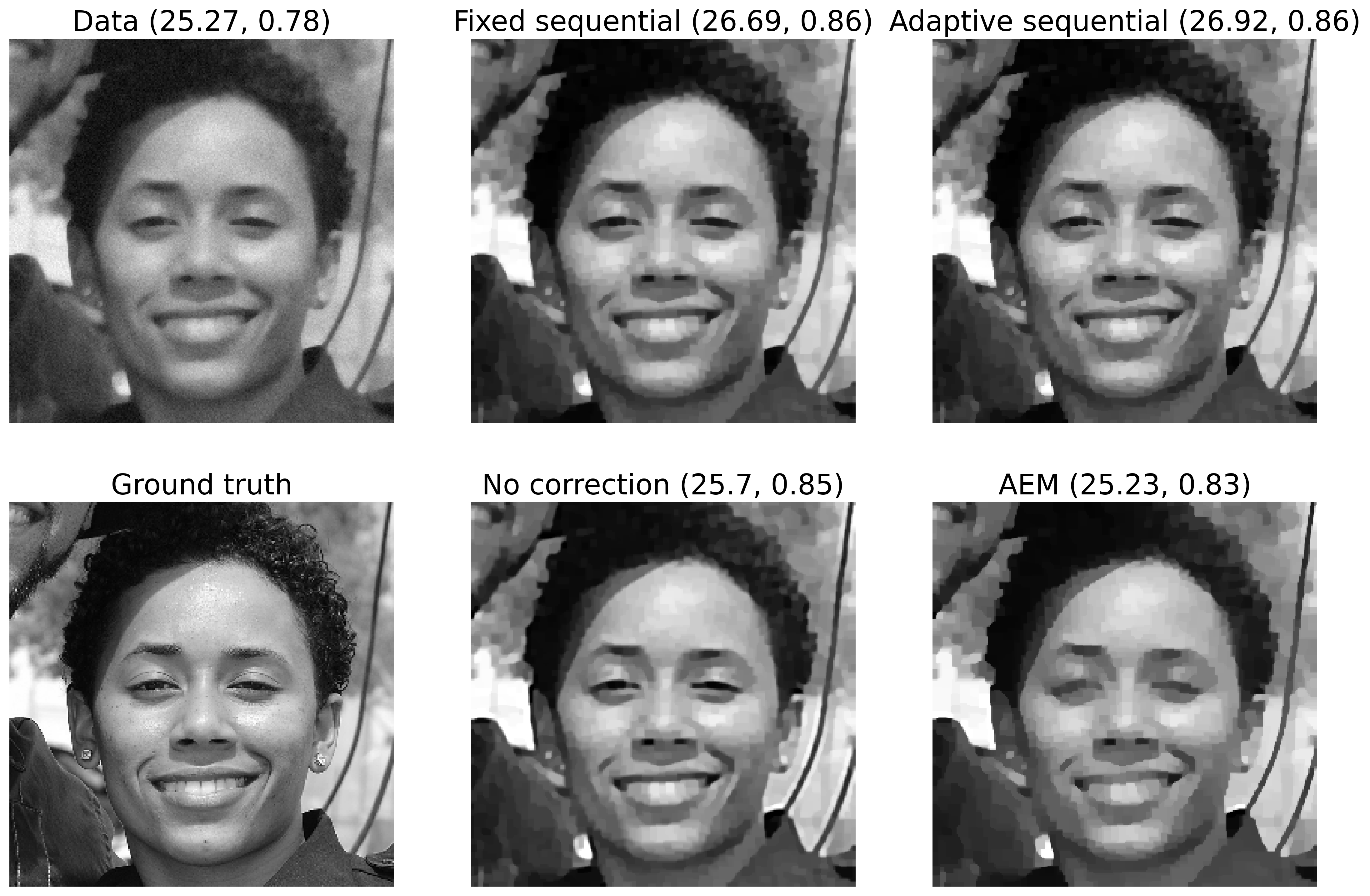}
    \caption{Reconstructions for the curvature flow model approximated with linear diffusion model with $3 \%$ Gaussian noise. The numbers in parentheses indicate peak signal to noise ratio (dB) and structural similarity index measure.}
    \label{fig:cf_gaussian}
\end{figure*}

\begin{figure*}[!t]
    \centering
    \includegraphics[width=0.9\linewidth]{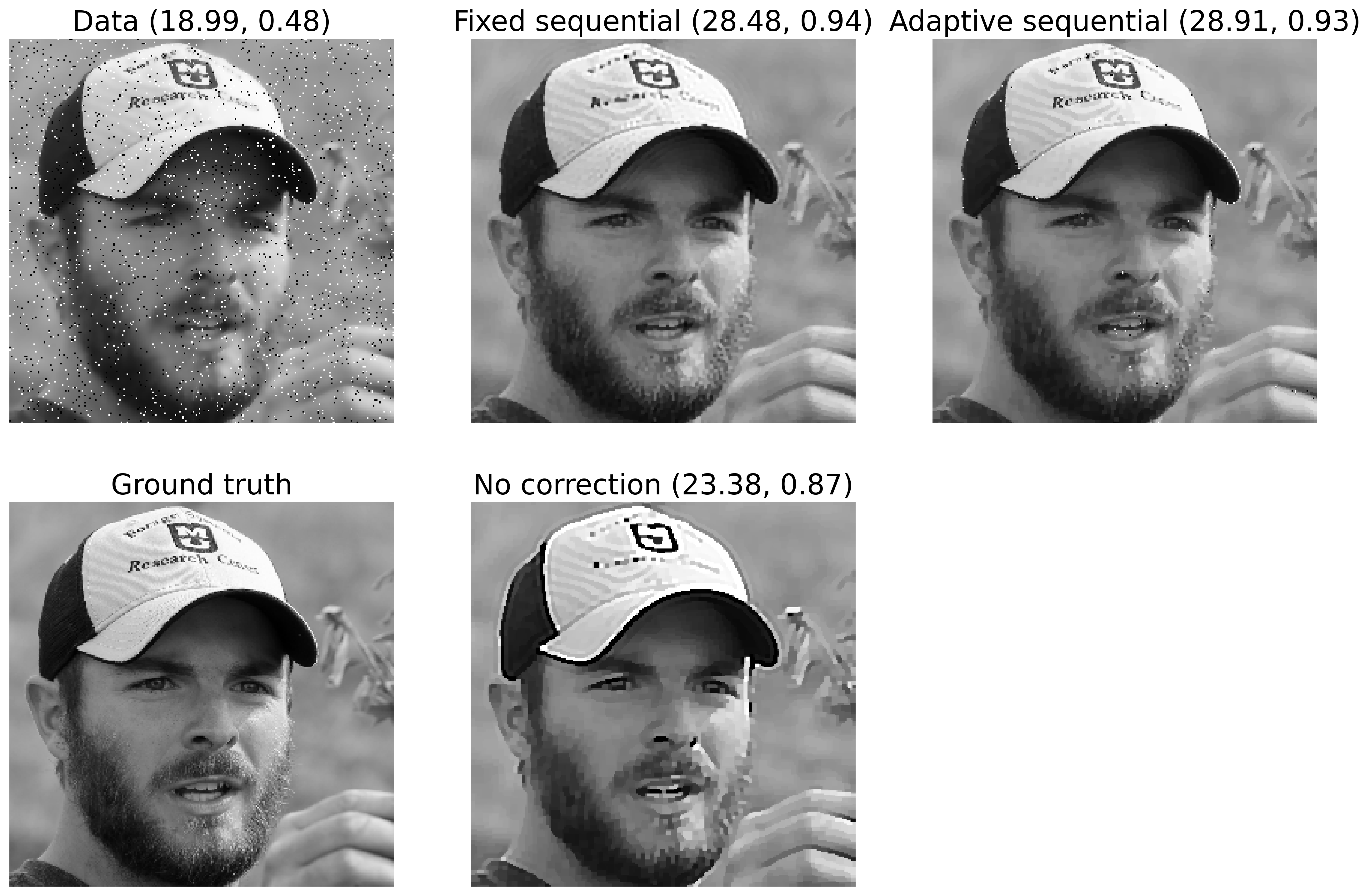}
    \caption{Reconstructions for the nonlinear diffusion model approximated with linear diffusion model with $4 \%$ salt and pepper noise. The numbers in parentheses indicate peak signal to noise ratio (dB) and structural similarity index measure.}
    \label{fig:nld_impulse}
\end{figure*}

\begin{figure*}[!t]
    \centering
    \includegraphics[width=0.9\linewidth]{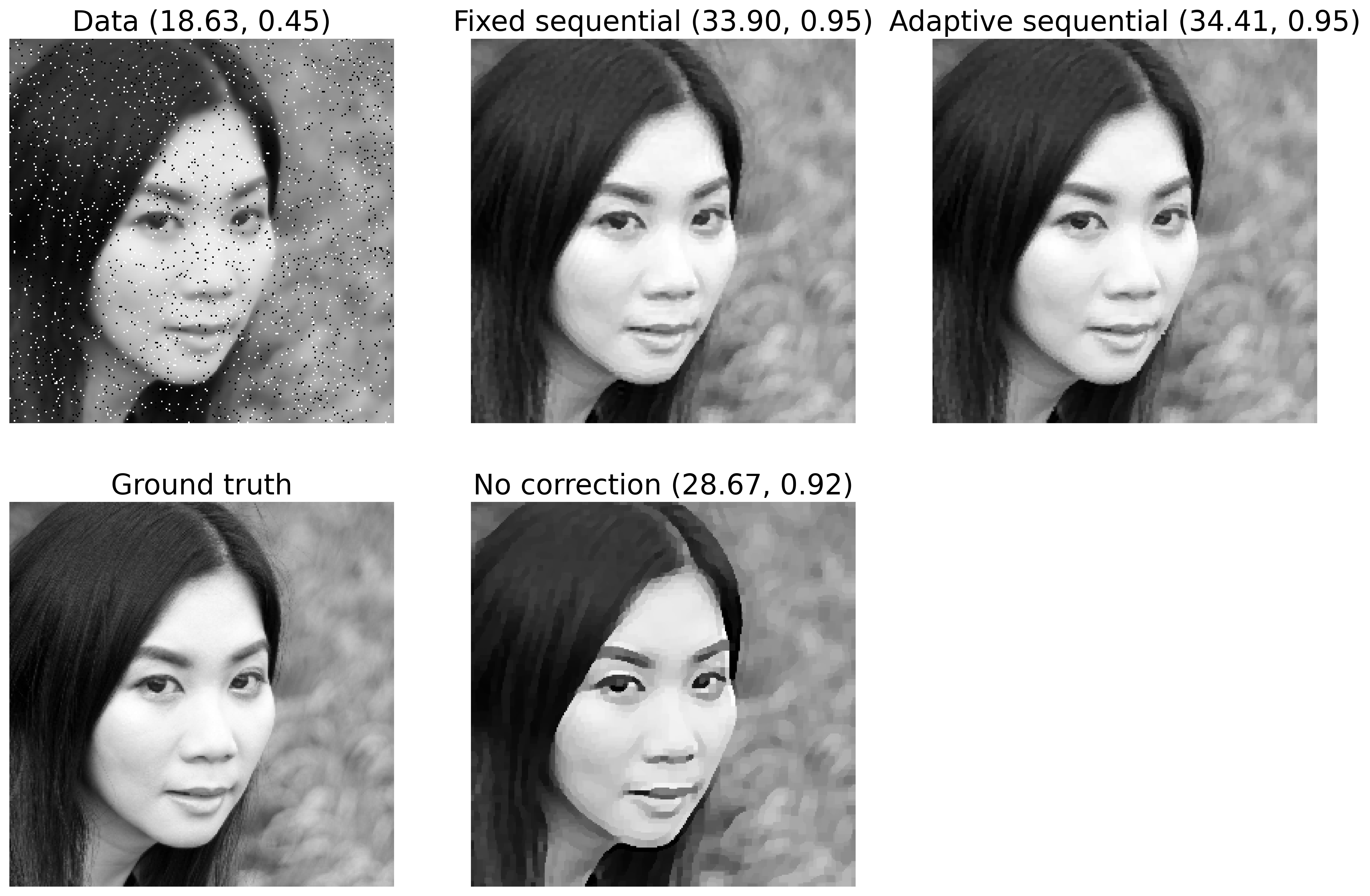}
    \caption{Reconstructions for the curvature flow model approximated with linear diffusion model with $4 \%$ salt and pepper noise. The numbers in parentheses indicate peak signal to noise ratio (dB) and structural similarity index measure.}
    \label{fig:cf_impulse}
\end{figure*}

Finally, we investigate the convergence of the sequences. Figures \ref{fig:convergence_l1} and \ref{fig:convergence_l2} show the evolution of the data fidelity $F(A(u), y)$ with respect to true model over sequence iterations. The behavior is as expected: the adaptive method converges to a lower value than the fixed method, experimentally confirming the analysis of Theorem \ref{thm:descend}, which stated that taking small enough steps in the sequence with adaptive approximation ends up decreasing the value of the objective function with respect to the true model. Furthermore, with the NLD model, the reconstructions with AEM obtain a slightly higher value of the data fidelity than with fixed approximation, while not using any correction gives the highest value. For the CF model the order of AEM and no correction is reversed. We can see from Table \ref{table:comptimes} that the fixed correction method offers a compromise between reconstruction quality and computational effort, compared to the adaptive method. The computation time is about one order of magnitude lower with the NLD model and Gaussian noise. The difference is less drastic with other models and noise types, due to slower convergence of the linear problem.

\begin{figure}[!h]
\centering
\begin{subfigure}{.39\textwidth}
    \centering
   \includegraphics[width=1\linewidth]{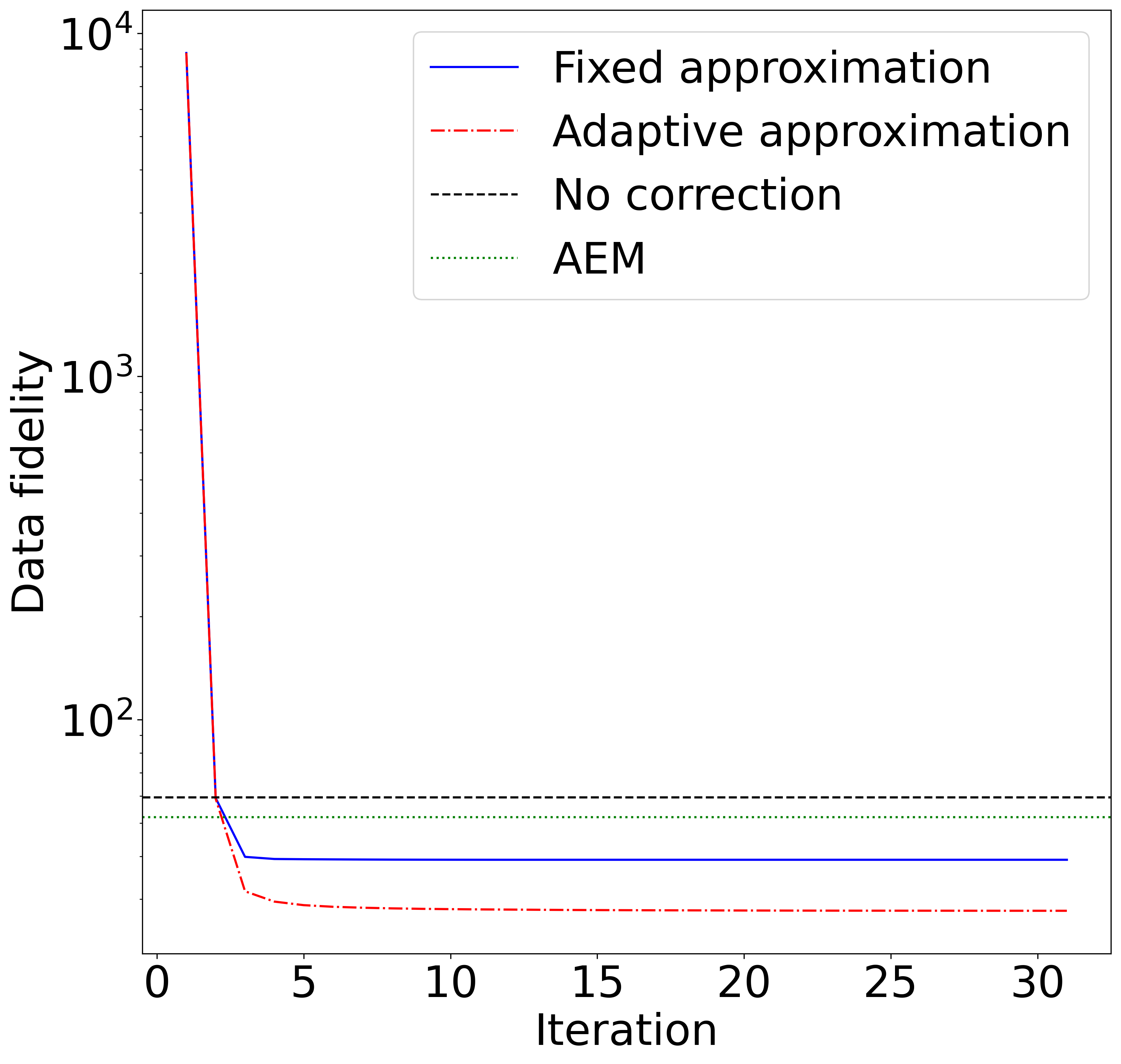}
   \caption{}
\end{subfigure}
\begin{subfigure}{.39\textwidth}
    \centering
   \includegraphics[width=1\linewidth]{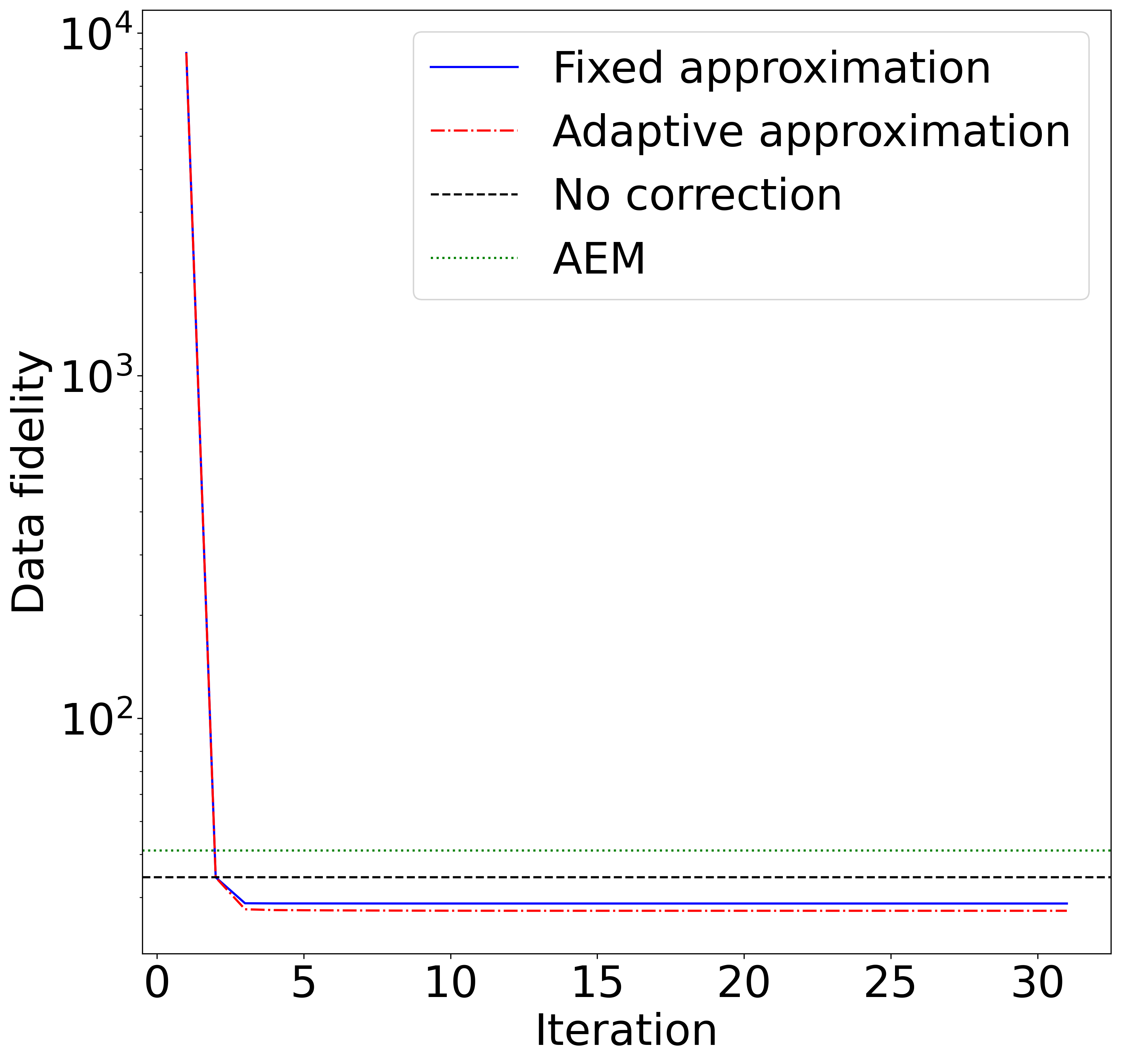}
   \caption{}
\end{subfigure}
\caption{Average of the $L_2$ data fidelity over reconstructions of 32 images with Gaussian noise for (a) nonlinear diffusion and (b) curvature flow models.}
\label{fig:convergence_l2}
\end{figure}

\begin{figure}[!h]
\centering
\begin{subfigure}{.39\textwidth}
    \centering
   \includegraphics[width=1\linewidth]{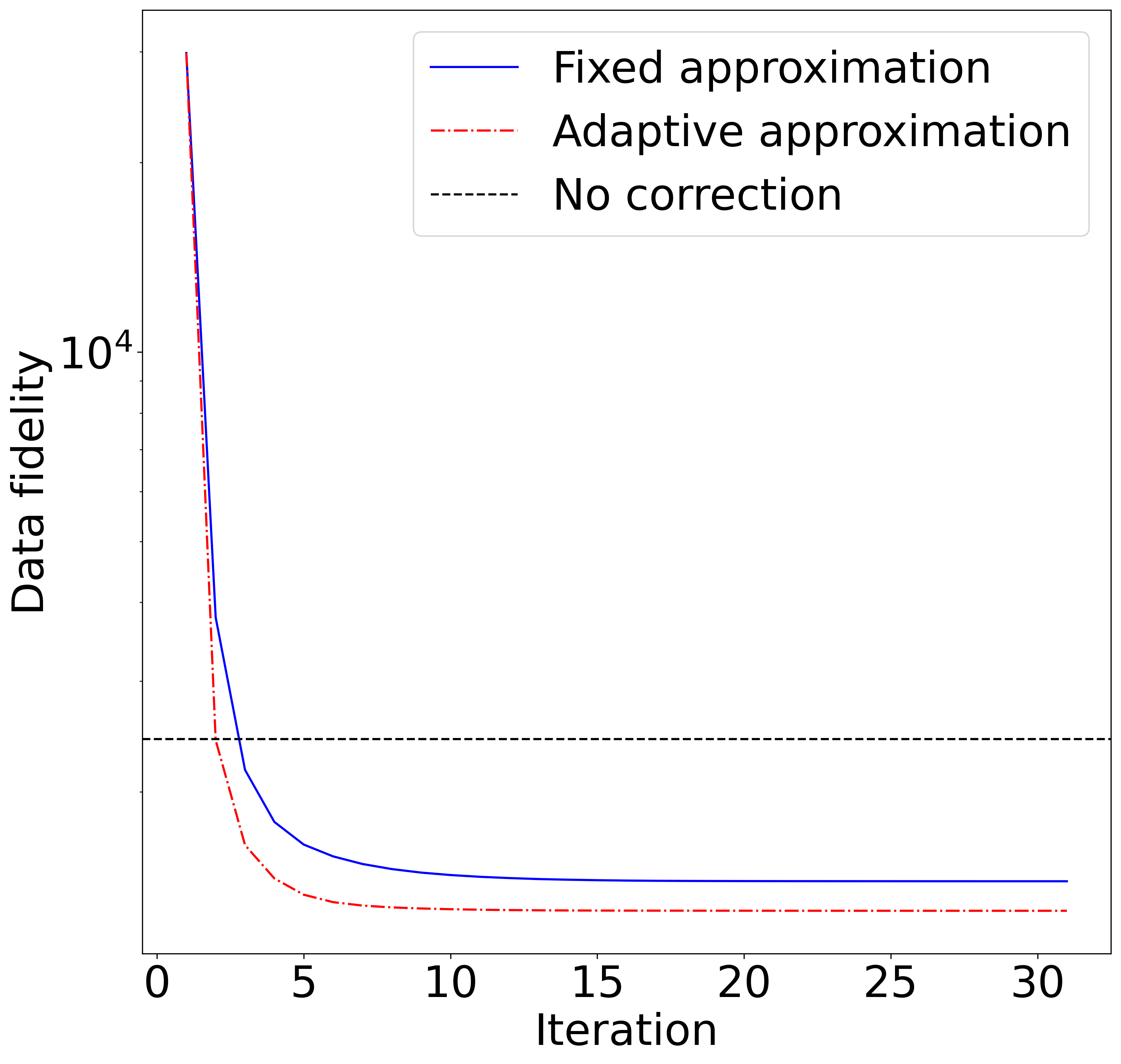}
   \caption{}
\end{subfigure}
\begin{subfigure}{.39\textwidth}
    \centering
   \includegraphics[width=1\linewidth]{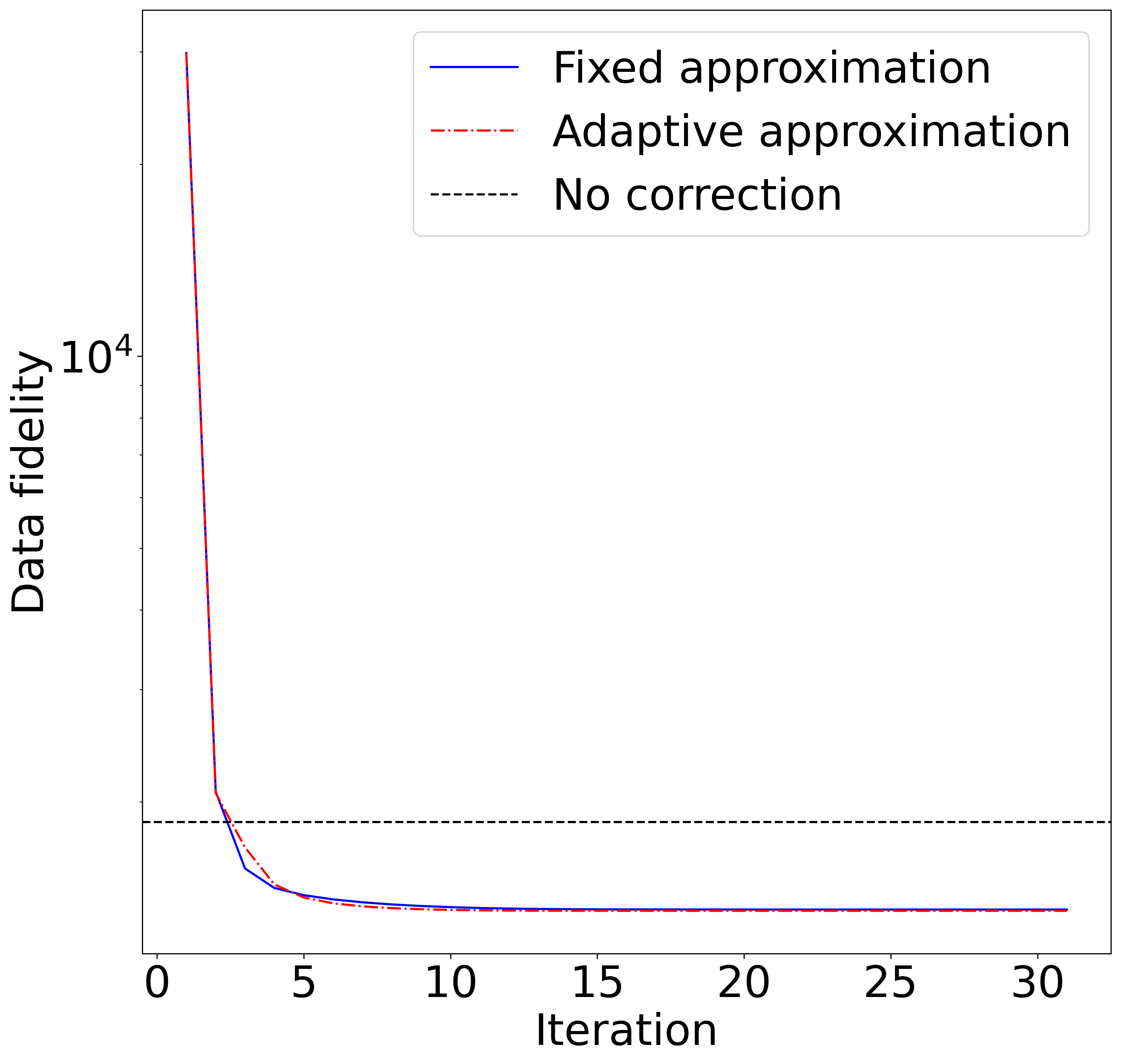}
   \caption{}
\end{subfigure}
\caption{Average of the $L_1$ data fidelity over reconstructions of 32 images with impulse noise for (a) nonlinear diffusion and (b) curvature flow models.}
\label{fig:convergence_l1}
\end{figure}

\begin{figure}[!h]
\centering
\begin{subfigure}[b]{0.8\textwidth}
   \includegraphics[width=1\linewidth]{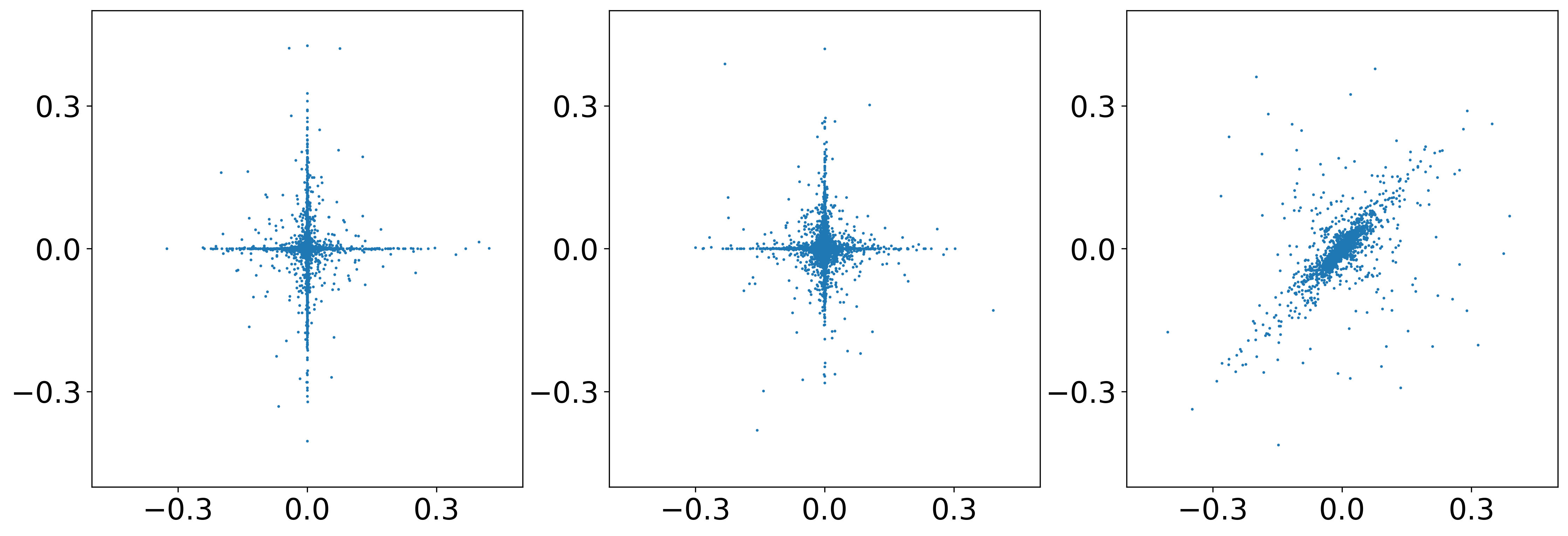}
   \caption{}
   \label{fig:Ng1} 
\end{subfigure}

\begin{subfigure}[b]{0.8\textwidth}
   \includegraphics[width=1\linewidth]{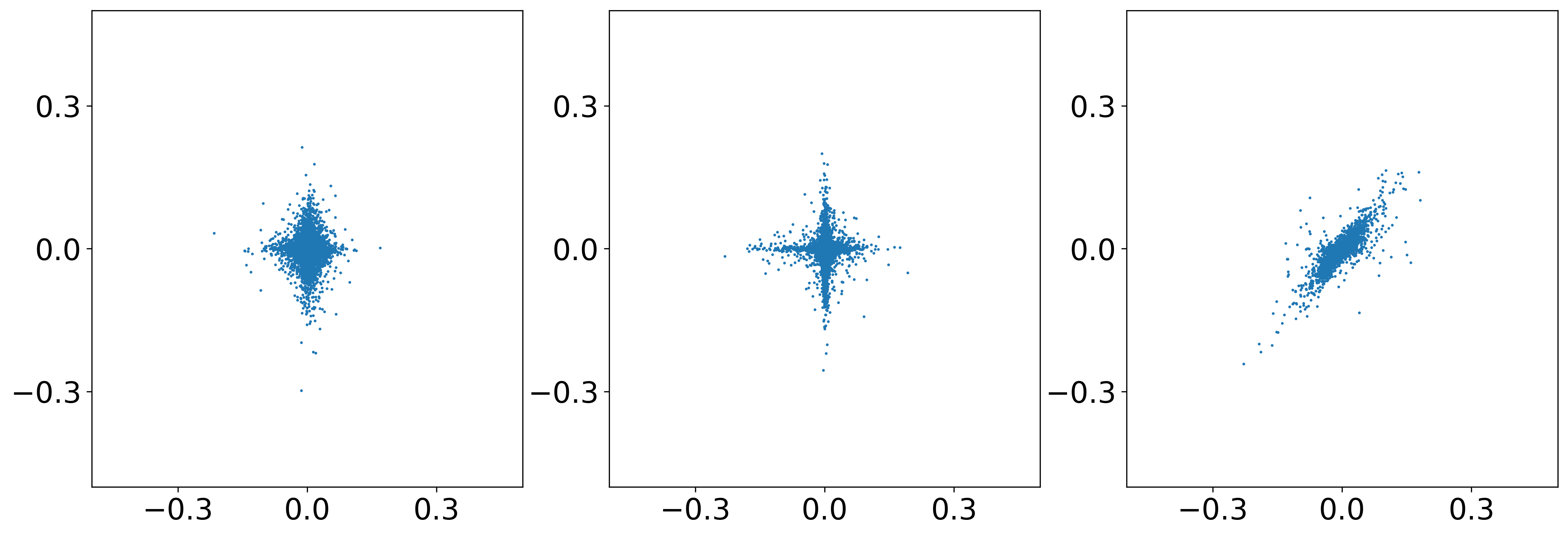}
   \caption{}
   \label{fig:Ng2}
\end{subfigure}
\caption{Scatterplots of some elements of the approximation error between (a) nonlinear and linear diffusion and (b) curvature flow and linear diffusion operators. These figures highlight non-Gaussianity in the approximation error and thus the need for a non-Gaussian correction.}
\label{fig:approxerrdist}
\end{figure}

\subsection{Discussion of results} \label{sec:discussion}
With the NLD model and impulse noise, the fixed method gives quantitatively better results than the adaptive method. This is mainly due to the noise left in the reconstruction (see Figure \ref{fig:nld_impulse}). While increasing the regularization parameter could get rid of the noise, the quality of the image would deteriorate in other locations, resulting in oversmoothing. A possible fix could be to consider other kinds of data fidelity models, such as Cauchy noise model \cite{cauchynoise}. It penalises outliers even less than the $L^1$ data fidelity which is based on the Laplace distribution. As we consider only convex data fidelity terms, we leave the Cauchy model to future studies as it is non-convex.

The computation time differences between fixed and adaptive methods are less pronounced for $L^1$ data fidelity, especially with the NLD model. A possible reason could be the slower convergence of the fixed method which converges in 15--20 iterations while the adaptive method converges in about 10--15 iterations on average. Another reason for the relative small differences in computation time is the efficiency of the computation of the Jacobian that comes from the autograd library in PyTorch utilizing GPU parallelization.

As discussed earlier, AEM is based on the Gaussianity assumption of the approximation error. Figure \ref{fig:approxerrdist} illustrates some of the pairwise distributions of the approximation error between the models used in this work. All in all, the distributions seem very non-Gaussian, with star-like shapes and outliers quite far from the center of the distribution. Even though the distribution is not Gaussian, the reconstructions with AEM with the NLD model achieve on average lower value of the data fidelity than not using any correction at all. It could be that the approximation error consists of multiple components of which some are Gaussian. AEM learns that part and leaves the rest of the error to be modeled as noise. We note that there exist also hierarchical constructions of the AEM, namely where the unknown is modeled as conditionally Gaussian (see e.g., \cite{calvetti_mesh_aem}). Hierarchical modeling of Gaussian variables brings some flexibility and adaptiveness to model and can help in recovering non-Gaussian structures. We note that recently proposed neural network-based correction methods are also capable of correcting non-Gaussian errors but require training data to use \cite{opcor2021lunz,smylnongaussian}.

\subsection{Comparison to other optimization techniques}
Adaptively approximating the nonlinear model with Taylor expansion revealed connections between model correction and optimization. With certain smoothness assumptions for the model and objective function, we proved that the sequence with adaptive approximation always decreases the objective function with respect to the exact model. We also showed that with quadratic objective functions the adaptive sequence corresponds to the classic Gauss-Newton method. As Gauss-Newton assumes that the data fidelity term is given by the $L^2$-norm, our approach is slightly more general than it.

The adaptive sequential correction method is also closely related to the majorization-minimization (MM) framework \cite{hunter2004tutorial}. In both methods, a convex surrogate of the original nonconvex function is constructed about the current iterate, and the surrogates are sequentially minimized. In MM, the surrogate is a majorizer, i.e., it is larger than the original function for every input, while this is not necessarily the case for our method. Furthermore, MM framework requires constructing the surrogate for each application separately and there are different methods for constructing it, whereas for our sequential method the surrogate follows naturally from the linearization of the model. However, MM is slightly more general than our method as we require convexity of the data fidelity and regularizer.

Trust-region methods are also related to our method \cite{kelley1999iterative}. They are sequential optimization methods that specify a ball of radius $\Delta$ (trust-region) about the current iterate. In the trust-region, the objective function is approximated with a quadratic function that agrees with the objective function up to the first derivative. The quadratic function is approximately minimized and either the minimizer is chosen as the next iterate or the radius of trust-region is reduced. The difference to our method is that the surrogate specified by the linearization is not necessarily quadratic.

Recent works have focused on extending the theory of primal-dual methods to allow nonlinear operators \cite{chen2021nonconvex,valkonen2014primal}. They are closely related to our work as they use different kinds of linearizations to convexify the problem. However, their approach is to modify the existing primal-dual algorithms for convex optimization to deal with the nonlinearity. This is fundamentally different to our method as we sequentially update the nonlinear term and use convex optimization to solve the sub-problem. Furthermore, our approach is not limited to primal-dual methods, any algorithm for convex optimization works.

\section{Conclusion} \label{sec:conclusion}
In this work we have proposed a strategy for correcting nonlinear models in the variational framework. We started from the observation that the conventional method for model correction that assumes a Gaussian distribution for the approximation error is not suitable for correcting non-Gaussian errors. These kind of errors arise especially when trying to correct nonlinear models with a linear approximation, as discussed in Section \ref{sec:aem}. The proposed strategy involves finding a linear approximation of the nonlinear model and solving the arising convex variational problem using the linear model. Updating the approximation error at the solution of the variational problem, the process is repeated until convergence. We investigated two different kinds of approximation, fixed and adaptive. The fixed approximation does not depend on the iteration number, making it computationally and conceptually simple to use, only requiring a few evaluations of the accurate nonlinear model. The sequence with the fixed approximation can be thought of as a fixed-point iteration, with simple conditions telling whether the sequence converges or not. Unfortunately, in practice it is rather difficult to tell if the conditions are fulfilled. Another possibility to ensure convergence is to terminate the sequence when the objective function with the exact operator can no longer be decreased. This involves evaluations of the correct model and might not be feasible if one evaluation is time consuming. In the case where the approximation is adaptive, we were able to draw connections between model correction and optimization literature. Specifically, if the approximation is chosen as a Taylor expansion, the adaptive sequence can be seen as an optimization method.

The connection to Gauss-Newton and MM methods could inspire further research to establish connections between model correction and nonlinear optimization. In many cases it is computationally prohibitive to differentiate a nonlinear operator. Here, other approximations that satisfy the convergence criterion of Theorem \ref{thm:descend} would be of further interest for future studies. Additionally, computationally cheap approximations of the derivative could be of interest, for instance by (learned) Quasi-Newton methods \cite{smyl2021efficient}.

Finally, we will consider the application of the sequential approximation for the use with other nonlinear PDE based inverse problems \cite{mozumder2021model,sahlstrom2020modeling}. Here, we believe that the fixed approximation without the need to differentiate the model could be of great computational advantage.

\section*{Acknowledgements} Much of the theory was developed during Arttu Arjas' visit to Marcelo Pereyra in Heriot-Watt University in Edinburgh. We also want to thank Simon Arridge for helpful discussions.

\bibliographystyle{siam}
\bibliography{seqbib.bib}

\end{document}